\newtheorem{thm}{Theorem}[section]
\newtheorem{lem}[thm]{Lemma}
\newtheorem{rem}[thm]{Remark}
\theoremstyle{definition}
\numberwithin{equation}{section} 
\numberwithin{figure}{section}
\numberwithin{table}{section}
\newcommand{\bE}{\mathbf{E}}
\newcommand{\eps}{\varepsilon}
\newcommand{\bR}{\mathbf{R}}
\newcommand{\Om}{\Omega}
\newcommand{\al}{\alpha}
\begin{document}

\title{Banach space valued Pisier and Riesz type inequalities on discrete cube}

\author{Paata Ivanisvili}
\address{(P.I.) Department of Mathematics, University of California, 
Irvine, CA 92617, and Dept. of Math. North Carolina State University, Raleigh, USA}
\email{pivanisv@uci.edu}


\author{Alexander Volberg}
\address{(A.V.) Department of Mathematics, MSU, 
East Lansing, MI 48823, USA and Hausdorff Center for Mathematics, Bonn, Germany}
\email{volberg@math.msu.edu}

\begin{abstract}
This is an attempt to build Banach space valued theory for certain singular integrals on Hamming cube.
Of course all estimates below are dimension independent, and we tried to find ultimate assumptions on the Banach space for a corresponding operators to be bounded. In certain cases we succeeded, although there are  still many open questions, some of them are listed in the last Section.
Using the approach of \cite{IVHV} and  also quantum random variables approach of  \cite{ELP} we generalize several theorems  of Pisier \cite{P} and Hyt\"onen-Naor \cite{HN}. 
\end{abstract}

\subjclass[2010]{46B09; 46B07; 60E15}

\keywords{Rademacher type; Enflo type;
Pisier's inequality; Banach space theory}

\maketitle

\tableofcontents

\thispagestyle{empty}

\section{Introduction}
\label{intro}

Functions on discrete  cube (Hamming cube) $\{-1,1\}^n$ can be viewed as polynomials of $n$ variables $\eps=(\eps_1, \dots, \eps_n)$, where each $\eps_i$ assumes only two values $\pm 1$. Obviously all such functions are polynomials and the degree is bounded by $n$. Moreover, these are only multi-linear polynomials.

Operator $\partial_i$ works exactly as the usual partial derivative with respect to $i$-th variable.
Another useful operator is 
$$
D_i f:= \eps_i\partial_i f\,.
$$
Obviously $D_i^2=D_i$.

The Laplacian on the discrete cube is defined by
$$
	\Delta f := -\sum_{j=1}^n D_jf.
$$
We denote by $P_t$ the standard heat semigroup on the cube, that is,
$$
	P_t := e^{t\Delta}.
$$
Recall that
$\Delta$ is self-adjoint on $L^2(\{-1,1\}^n)$ with quadratic form
$$
	-\mathbf{E}[f(\varepsilon)\,\Delta g(\varepsilon)] =
	\sum_{j=1}^n \mathbf{E}[D_jf(\varepsilon)\, D_jg(\varepsilon)].
$$

\section{Formula}
\label{f}

The following random variables will appear frequently in the sequel, so we 
fix them once and for all. Given $t>0$, we let 
$\xi(t)$ be a random vector in the cube, independent of $\varepsilon$, 
whose coordinates $\xi_i(t)$ are independent and identically distributed 
with
$$
	\mathbf{P}\{\xi_i(t)=1\}=\frac{1+e^{-t}}{2},\qquad
	\mathbf{P}\{\xi_i(t)=-1\}=\frac{1-e^{-t}}{2}.
$$
We also define the standardized vector $\delta(t)=(\delta_1(t),\dots, \delta_n(t))$ by
$$
	\delta_i(t) := \frac{\xi_i(t)-\mathbf{E}\xi_i(t)}{
	\sqrt{\mathop{\mathrm{Var}}\xi_i(t)}} =
	\frac{\xi_i(t) - e^{-t}}{\sqrt{1-e^{-2t}}}.
$$

The basis for the proof of most results below are the following two
probabilistic representation of the heat semigroup and its discrete
partial derivatives, bot proved in \cite{IVHV}.

\begin{thm}
\label{main-th}
We have
$$
	P_tf(x) = \mathbf{E}[f(x_1\xi_1(t),\ldots,x_n\xi_n(t))]
	\quad  \mbox{for }t\ge 0,
$$
and
\begin{equation}
\label{main-f}
e^{t\Delta} D_j f (\eps)= \frac{e^{-t}}{\sqrt{1-e^{-2t}}} \bE_\xi \Big[\frac{\xi_j(t) -e^{-t}}{\sqrt{1-e^{-2t}}} f(\eps \xi(t))\Big]\,,
\end{equation}
where $\xi_i(t)$ are as above.
\end{thm}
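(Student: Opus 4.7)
The plan is to reduce both identities to the Walsh basis. Every function on $\{-1,1\}^n$ is a linear combination of the characters $\chi_S(\eps) := \prod_{i \in S}\eps_i$, $S \subseteq \{1,\ldots,n\}$, and both identities are linear in $f$, so it suffices to check each on an arbitrary $\chi_S$. A direct calculation gives $D_j\chi_S = \chi_S\mathbf{1}_{\{j \in S\}}$, hence $-\Delta\chi_S = |S|\chi_S$ and $P_t\chi_S = e^{-t|S|}\chi_S$; since $D_j$ is also diagonal in this basis, $e^{t\Delta}D_j\chi_S = e^{-t|S|}\chi_S\mathbf{1}_{\{j \in S\}}$.

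For the first identity I use that the $\xi_i(t)$ are independent with $\bE\xi_i(t) = e^{-t}$, so
$$
\bE\Big[\prod_{i \in S} x_i\xi_i(t)\Big] = \chi_S(x)\prod_{i \in S}\bE\xi_i(t) = e^{-t|S|}\chi_S(x) = P_t\chi_S(x).
$$

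For \eqref{main-f}, substituting $f = \chi_S$ on the right-hand side and pulling out $\chi_S(\eps)$ gives
$$
\frac{e^{-t}}{1-e^{-2t}}\,\chi_S(\eps)\,\bE\Big[(\xi_j(t) - e^{-t})\prod_{i \in S}\xi_i(t)\Big].
$$
I then split on whether $j \in S$. If $j \notin S$, independence together with $\bE[\xi_j(t) - e^{-t}] = 0$ makes the expectation vanish, matching $e^{t\Delta}D_j\chi_S = 0$. If $j \in S$, the $j$-th factor contributes $\bE[\xi_j(t)^2 - e^{-t}\xi_j(t)] = 1 - e^{-2t}$ (using $\xi_j(t)^2 = 1$ and $\bE\xi_j(t) = e^{-t}$), the remaining $|S|-1$ factors contribute $e^{-t(|S|-1)}$ by independence, and the prefactor $e^{-t}/(1-e^{-2t})$ cancels the variance to leave $e^{-t|S|}\chi_S(\eps)$, as required.

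There is no substantial obstacle; the calculation is essentially bookkeeping. The conceptual point worth flagging is that the normalization $\frac{e^{-t}}{\sqrt{1-e^{-2t}}}\cdot\frac{1}{\sqrt{1-e^{-2t}}}$ in \eqref{main-f} is calibrated precisely so that $\mathop{\mathrm{Var}}\xi_j(t) = 1-e^{-2t}$ cancels the denominator, producing exactly the heat-semigroup eigenvalue $e^{-t|S|}$. An alternative route is to derive \eqref{main-f} from the first identity by computing $D_j P_t f$ via the discrete product rule and the integration-by-parts identity $H(1) - H(-1) = \frac{2}{1 - e^{-2t}}\bE[(\xi_j(t) - e^{-t})H(\xi_j(t))]$, using that $D_j$ and $P_t$ commute on the Walsh basis; the Walsh verification above is nonetheless the cleanest.
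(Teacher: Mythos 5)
Your verification on the Walsh characters $\chi_S$ is correct in every step — the key cancellation being that $\bE\big[(\xi_j(t)-e^{-t})\xi_j(t)\big]=1-e^{-2t}$ neutralises the prefactor and leaves $e^{-t|S|}$, exactly the eigenvalue of $e^{t\Delta}D_j$ when $j\in S$, and independence kills the expectation when $j\notin S$. This is precisely the ``direct calculation'' the paper alludes to (and defers to Lemma 2.1 of \cite{IVHV}), so you have supplied the same proof the authors have in mind, just written out explicitly.
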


 The formulae can be checked by direct calculation, see also Lemma 2.1 of \cite{IVHV}. The proof is easy, but it turns out to have many consequences, including the solution of Enflo's problem in \cite{IVHV}.

\section{$X$-valued Riesz type estimates from below and from above}
\label{beab}
In what follows we will be interested in the following two different but related inequalities. They hide several parameters. The first one is positive number $a$ that one need to make as small as possible, there is also parameter $p\in [1, \infty)$ (often we had to exclude $p=1$, but not always), they play with each other. But there is a third parameter, namely, a Banach space $X$, this is where the functions on discrete cube assume their values.

Here is the first class of inequalities: for which $(a, p, X)$ this holds:
\begin{equation}
\label{Rabove}
\|\sum_{i=1}^n \delta_i \Delta^{-a} D_i f\|_{L^{p'}(X^*)} \le C\|f\|_{L^{p'}(X^*)}\,?
\end{equation}
It is useful to write a dual version. Let $F(\delta, \eps)$ is an $X$ valued function on $\{-1, 1\}^n\times \{-1, 1\}^n$.
Let $F_i:=\bE_\delta [\delta_i F]$. The dual version of \eqref{Rabove}
is
\begin{equation}
\label{RaboveD}
\|\sum_{i=1}^n  \Delta^{-a} D_i F_i\|_{L^{p}(X)} \le C\|F\|_{L^p(X)}\,?
\end{equation}
We will call this type of inequalities Riesz type estimates from above.  

\medskip

If as $F$ we take a particular sort of functions: $F(\delta, \eps) =\sum_{i=1}^n \delta_i f_i(\eps)$ we get a particular case of the above (so an easier type of estimate, in principle):

\begin{equation}
\label{RbnoD}
\|\sum_{i=1}^n \Delta^{-a} D_i f_i\|_{L^{p}(X)} \le C \|\sum_{i=1}^n \delta_i f_i\|_{L^p(X)}\,.
\end{equation}

It turns out that the following variant of \eqref{RbnoD} is of more interest:

\begin{equation}
\label{Rbelow}
\|\sum_{i=1}^n \Delta^{-a} D_i f_i\|_{L^{p}(X)} \le C \|\sum_{i=1}^n \delta_i D_if_i\|_{L^p(X)}\,.
\end{equation}
\begin{rem}
\label{odd}
It is exactly as  \eqref{RbnoD} if we restrict  \eqref{RbnoD} to functions $f_i$ odd in $\eps_i$. 
\end{rem}
We call  this type of inequalities Riesz type estimates from below.

Consider special case $a=1/2$ in \eqref{Rabove} and the same $a$ and $f-1=\dots-f_n=f$ in \eqref{Rbelow}. Riesz transforms are operators $R_i =\Delta^{-1/2} D_i$, the total Riesz transform is $R:= (R_1,\dots, R_n)$. For example, \eqref{Rbelow} becomes (after using that $\sum D_i =-\Delta$)
$$
\|\Delta^{1/2} f\|_{L^p(X)} \le C_p \|\sum_i \delta_i D_i f\|_{L^p(X)}, 
$$
which is the same as
\begin{equation}
\label{Rbe}
\| f\|_{L^p(X)} \le C_p \|\sum_i \delta_i R_i f\|_{L^p(X)}\,.
\end{equation}
This explains why we used the expression ``estimate of Riesz transform from below''.
Inequality \eqref{Rabove} with $a=1/2$ assumes the form  (we change $X^*$ to $X$ and $p'$ to $p$)
\begin{equation}
\label{Rab}
 \|\sum_i \delta_i R_i f\|_{L^p(X)} \le C_p \|f\|_{L^p(X)} \,.
 \end{equation}
This explains why we used the expression ``estimate of Riesz transform from above''.

\bigskip

\subsection{Comparison of \eqref{RbnoD} and \eqref{Rbelow}}
\label{noD}

Let us consider \eqref{RbnoD} with $a=1/2$ and for scalar valued functions $f$, namely,
\begin{equation}
\label{RbnoD12}
\|\sum_{i=1}^n \Delta^{-a} D_i f_i\|_{p} \le C\Big(\bE_\delta \|\sum_{i=1}^n \delta_i f_i\|_{p}^p\Big)^{1/p}\,.
\end{equation}
If this were true for $p\ge 2$, then by duality, we would prove
\begin{equation}
\label{Lamb}
\||\nabla g|_{\ell^2}\|_{L^{p'}} \lesssim \|\Delta^{1/2} g\|_{L^{p'}}, \quad 1< p'<2,
\end{equation}
but it is know that that inequality is false, it is called Lamberton's counterexample, see \cite{ELP}.

Indeed, let us derive \eqref{Lamb} from \eqref{RbnoD12}. Let $\vec f =(f_1,\dots, f_n)$ be a test vector function, and  let \eqref{RbnoD12} is valid. Then integrating by parts, we het
$$
\langle \nabla g, \vec f\rangle = \bE( \sum_i  D_i f_i) g=  \bE( \sum_i  \Delta^{-1/2}D_i f_i) \Delta^{1/2}g \le
$$
$$
\|\Delta^{1/2} g\|_{p'} \| \sum_i  \Delta^{-1/2}D_i f_i\|_p  \le^{\eqref{RbnoD12}}
$$
$$
C\|\Delta^{1/2} g\|_{p'} \Big(\bE_\delta \|\sum_{i=1}^n \delta_i f_i\|_{p}^p\Big)^{1/p} \le C\|\Delta^{1/2} g\|_{p'} \| |\vec f|_{\ell^2}\|_p,
$$
where the last inequality is Khintchine inequality for scalar functions. Now, in the left hand side, we take the supremum 
over all test functions $\vec f$ such that $\| |\vec f|_{\ell^2}\|_p\le 1$. We get then \eqref{Lamb}, which we know cannot be true. This disproves \eqref{RbnoD12} even for scalar valued functions and $p>2$.

\medskip

\begin{rem}
\label{belowWide}
On the other hand we will see below that  \eqref{Rbelow} with $a=1/2$  is valid in variety of regimes including Banach space valued regimes.  In particular, for $f_1=\dots f_n=f$, $X=\bR$, it is proved in Theorem 1.1 4) of \cite{ELP} by use of quantum random variables. We generalize a little bit the result of \cite{ELP} in Section \ref{qua}.
\end{rem}

\begin{rem}
\label{Bellman}
In Section \ref{qua} we borrow the quantum variable method of \cite{ELP}. But for scalar valued functions we could have done this using the Bellman function method. This will be pursued in a separate note.
\end{rem}

\bigskip

\subsection{Plan}
Most of time we will be dealing with estimates from below. As we already remarked they are somewhat easier than the estimates from above. In particular the stock of Banach spaces will be larger. But let us recall that this stock can be dependent of $a$ and $p$. There are still many open questions concerning this dependence. See below.

\section{Simple corollaries from the formula, $a=1+\eps -\frac1{\max(p, q)}$}
\label{maxpq}

\subsection{A variant of Pisier's inequality in spaces of finite co-type}
\label{sec:co-type}

\begin{thm}
\label{simple1}
Let $X$ have finite  co-type $q, 2\le q<\infty$. Let $\{f_i\}_{i=1}^n$ be $X$-valued function on discrete cube $\{-1, 1\}^n$.
Let $1\le p<\infty$
Then
$$
\big(\bE \|\sum \Delta^{\frac1{\max(p, q)} -1-\eps} D_i f_i\|_X^p\big)^{1/p} \le C(q, p, \eps) \Big(  \bE\Big\|\sum_{j=1}^n \delta_j  f_j(\eps )\Big\|_X^p\Big)^{1/p}\,
$$
where $\{\delta_j\}_{j=1}^n$ are standard Rademacher r.v. independent of $\eps$.
\end{thm}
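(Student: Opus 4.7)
With $a = 1+\varepsilon - 1/\max(p,q) > 0$, I would apply the subordination formula $\Delta^{-a} = \frac{1}{\Gamma(a)}\int_0^\infty t^{a-1} P_t\,dt$ (on mean-zero functions) and then substitute formula \eqref{main-f} for each $P_t D_i$, obtaining
$$
\sum_i \Delta^{-a} D_i f_i(\varepsilon) \;=\; \frac{1}{\Gamma(a)}\int_0^\infty t^{a-1}\,\frac{e^{-t}}{\sqrt{1-e^{-2t}}}\,\mathbf{E}_\xi\!\Big[\sum_i \delta_i(t)\,f_i(\varepsilon\,\xi(t))\Big]\,dt.
$$

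\textbf{Reduction to a pointwise-in-$\varepsilon$ estimate.} I would then apply Minkowski's integral inequality to pull the $L^p(X)$-norm under the $dt$-integral, Jensen to pull $\mathbf{E}_\xi$ inside the norm, and the unitary change of coordinates $\varepsilon \mapsto \varepsilon\,\xi(t)$ (which for each fixed $\xi$ preserves the uniform measure on the cube) to eliminate the dependence of $f_i$ on $\xi$. This reduces the problem to bounding
$$
A(t) \;:=\; \Big(\mathbf{E}_\varepsilon\mathbf{E}_\xi\Big\|\sum_i \delta_i(t)\,f_i(\varepsilon)\Big\|_X^p\Big)^{1/p}
$$
by the target RHS $B := (\mathbf{E}_\varepsilon\mathbf{E}_\delta\|\sum \delta_i f_i(\varepsilon)\|^p_X)^{1/p}$, uniformly in $t$ with an admissible weight.

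\textbf{Key estimate and completion.} The $\delta_i(t)$ are i.i.d., mean-zero, with $\mathbf{E}\delta_i(t)^2 = 1$, and a direct computation gives $\|\delta_1(t)\|_{L^r} \asymp t^{1/r-1/2}$ as $t \to 0^+$ for $r > 2$, and $\|\delta_1(t)\|_{L^r} \lesssim 1$ for $t \ge 1$. Fixing $\varepsilon$ and freezing $x_i := f_i(\varepsilon)$, I seek a Khintchine-type comparison
$$
\Big(\mathbf{E}_\xi\Big\|\sum \delta_i(t)\,x_i\Big\|_X^p\Big)^{1/p} \;\lesssim_{p,q}\; \|\delta_1(t)\|_{L^{\max(p,q)}}\,\Big(\mathbf{E}_\delta\Big\|\sum \delta_i x_i\Big\|_X^p\Big)^{1/p}
$$
valid in cotype-$q$ spaces. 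Integrating over $\varepsilon$ gives $A(t) \lesssim (1 + t^{1/\max(p,q)-1/2})\,B$. Substituting, the integrand near $t = 0$ behaves like $t^{a-1}\cdot t^{-1/2}\cdot t^{1/\max(p,q)-1/2} = t^{\varepsilon-1}$, which is integrable precisely because $\varepsilon > 0$; at infinity $e^{-t}$ provides decay. The final constant $C(p,q,\varepsilon)$ blows up like $1/\varepsilon$.

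\textbf{Main obstacle.} The crux is the key estimate above. The crude approach of symmetrizing and applying Kahane's contraction principle replaces $\|\delta_1(t)\|_{L^{\max(p,q)}}$ by $\|\delta_1(t)\|_{L^\infty} = \sqrt{(1+e^{-t})/(1-e^{-t})} \asymp t^{-1/2}$, forcing the much weaker condition $a > 1$ and losing the $1/\max(p,q)$ gain entirely. Exploiting cotype $q$ to upgrade the moment of $\delta_1(t)$ from $L^\infty$ all the way down to $L^{\max(p,q)}$ is the essential input: one symmetrizes, then uses Kahane's moment equivalence for Rademacher sums in combination with the cotype $q$ inequality $\mathbf{E}\|\sum\eta_i y_i\|^q \gtrsim \sum\|y_i\|^q$ applied to the vectors $y_i = \delta_i(t)\,x_i$. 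This is the only place in the argument where the Banach-space geometry of $X$ enters, and it is exactly the mechanism that produces the exponent $1+\varepsilon - 1/\max(p,q)$ in the statement; a subsidiary point is that the case $p < q$ can be handled by Jensen (so that Kahane's comparison can be applied at the cotype exponent $q$).
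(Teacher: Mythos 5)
Your proposal is correct and follows essentially the same route as the paper: the fractional-integral representation $\Delta^{-a}=c_a\int_0^\infty t^{a-1}P_t\,dt$ combined with formula \eqref{main-f}, Minkowski/Jensen and equidistribution to reduce to a comparison of $\sum\delta_i(t)x_i$ with $\sum\delta_i x_i$, and then the Maurey--Pisier contraction principle (Theorem~\ref{thm:mp}, quoted as Theorem 4.1 of \cite{IVHV}) to upgrade the Kahane bound $\|\delta_1(t)\|_\infty\sim t^{-1/2}$ to the cotype-improved bound $\sim t^{1/\max(p,q)-1/2}$, which is exactly what makes the $dt$-integral converge for $\varepsilon>0$. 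The only cosmetic difference is that you phrase the key contraction estimate with the $L^{\max(p,q)}$ moment of $\delta_1(t)$, whereas the paper uses the Lorentz $L^{\max(p,q),1}$ integral $\int_0^\infty\mathbf{P}\{|\xi_1(t)-\xi_1'(t)|>s\}^{1/\max(p,q)}\,ds$ after symmetrizing $\delta_i(t)\mapsto(\xi_i(t)-\xi_i'(t))/\sqrt{1-e^{-2t}}$ (which is needed since the $\delta_i(t)$ are not symmetric); for these two-valued variables the two norms are comparable, so the asymptotics and the conclusion agree.
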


\begin{proof}
Let us use formula \eqref{main-f} as following: we multiply it by $t^{-\rho}$, $0\le \rho<1$, and integrate $\int_0^\infty\dots dt$. We use an obvious relationship
\begin{equation}
\label{fracPower}
\int_0^\infty \frac1{t^\rho} e^{\Delta t} dt = \Delta^{-1+\rho}, \quad 0\le \rho <1\,.
\end{equation}
Then we get
\begin{equation}
\label{sum1}
\sum_{i=1}^n\Delta^{-1+\rho} D_i f_i = \int_0^\infty \frac{e^{-t}}{t^\rho \sqrt{1-e^{-2t}}}\bE_\xi \Big[\sum_{j=1}^n\delta_j(t)f_j(\eps \xi(t))\Big], \quad \delta_j(t) := \frac{\xi_j(t) -e^{-t}}{\sqrt{1-e^{-2t}}}\,.
\end{equation} 
$$
\Delta^{-1+\rho} \sum_{j=1}^n D_j f_j (\eps)= \int_0^\infty\frac{e^{-t}}{t^\rho\sqrt{1-e^{-2t}}} \bE_\xi \Big[\sum_{j=1}^n \delta_j(t) [ f_j](\eps \xi(t))\Big]dt\,,
$$
We put norm on the both parts of that equality.
$$
(\bE_\eps \|\sum_{j=1}^n \Delta^{-1+\rho} D_j f_j (\eps)\|^p)^{1/p} \le \int_0^\infty \Big(\bE_\xi\bE_\eps \Big\|\sum_{j=1}^n \delta_j(t) [f_j](\eps \xi(t))\Big\|^p\Big)^{1/p} \, \frac{e^{-t}}{t^{\rho}\sqrt{1-e^{-2t}}} dt=
$$
$$
 \int_0^\infty \Big(\bE_\xi\bE_\eps \Big\|\sum_{j=1}^n \delta_j(t) [f_j](\eps )\Big\|^p\Big)^{1/p} \, \frac{e^{-t}}{t^{\rho}\sqrt{1-e^{-2t}}} dt,
$$
where we used that for every fixed $t, \xi(t)$ the distribution of $\eps\to [\Delta f_j](\eps\xi (t))$ is the same as that of $\eps\to [\Delta f_j](\eps )$. 

We continue by introducing symmetrization by means of $\{\xi_i'(t)\}$ independent from all $\{\xi_j\}_{j=1}^n$ and having the same distribution as $\xi_i$, $i=1, \dots, n$, exactly as Theorem 4.1 of \cite{IVHV} and use contraction principle  of Maurey and Pisier  \cite{P}, Proposition 3.2,  for $q$ co-type valued $f_i$:

$$
 \int_0^\infty \Big(\bE_\eps \bE_{\xi, \xi'} \Big\|\sum_{j=1}^n  \frac{\xi_j(t) -\xi'_j(t)}{\sqrt{1-e^{-2t}}} [ f_j](\eps )\Big\|^p\Big)^{1/p} \, \frac{e^{-t}}{t^{\rho}\sqrt{1-e^{-2t}}} dt\le
$$
$$
C(q, p)  \int_0^\infty \Big(\bE_\eps \bE_\delta \Big\|\sum_{j=1}^n \delta_j [f_j](\eps )\Big\|^p\Big)^{1/p} \, \frac{e^{-t}}{t^\rho(1-e^{-2t})^{1-\frac{1}{\max (q, p)} }}dt\,,
$$
where we used Theorem 4.1 from \cite{IVHV} and the fact that the co-type of $X$ is $q<\infty$. 
The convergence of integral is ensured by the choice
$$
\rho= \frac1{\max(p,q)} -\eps, \quad \forall \eps>0.
$$

\end{proof}

\begin{rem}
\label{const1}
One can see that $C(q, p) = C_q(X) \max(1, \sqrt{q/p})$. If $\rho=0$, then we get the following constant growth for large $p\ge q$:
\begin{equation}
\label{const2}
\big(\bE \|\sum \Delta^{ -1} D_i f_i\|_X^p\big)^{1/p} \le C_q(X) \, p\, \Big(  \bE\Big\|\sum_{j=1}^n \delta_j  f_j(\eps )\Big\|^p\Big)^{1/p}\,, \quad q\le p\,.
\end{equation}

Using the fact that $D_i^2 =D_i$ we get the following less general (but as we will see later ``more correct'') version
\begin{equation}
\label{constD2}
\big(\bE \|\sum \Delta^{ -1} D_i f_i\|_X^p\big)^{1/p} \le C_q(X) \, p\, \Big(  \bE\Big\|\sum_{j=1}^n \delta_j  D_jf_j(\eps )\Big\|^p\Big)^{1/p}\,, \quad q\le p\,.
\end{equation}

Choosing $f_i = D_i (f-\bE f)$ and using that $\sum_i D_i = -\Delta$ we get back Pisier inequality for function with values in a finite co-type (co-type $q$) Banach space:
\begin{equation}
\label{const2f}
\big(\bE \| f-\bE f\|_X^p\big)^{1/p} \le C_q(X) \, p\, \Big(  \bE\Big\|\sum_{j=1}^n \delta_j  D_jf(\eps )\Big\|^p\Big)^{1/p}\,, \quad q\le p\,.
\end{equation}
\end{rem}

\begin{rem}
\label{sung1}
We call the attention of the reader to the ``wrong'' constant growth with respect to $p$. One would expect to have for large $p$ the estimate of type $C(q, X) \sqrt{p}$. We actually know that for scalar valued $f$ (that is for $X=\bR$) the constant does grow at most like $C\sqrt{p}$. However, our formula did not give us that. By more efforts one can get the right estimate $C(q, X) \sqrt{p}$ at least for $X= L^q, 1\le q\le 2$, $p$ large. Hence, in particular one can restore the right growth for scalar valued functions. It would be unfortunate if one were unable to do that as the right growth of constant with respect to $p$ is equivalent to subgaussian concentration for Lipschitz functions on discrete cube.
\end{rem}


\begin{thm}
\label{co-type}
For any functions $f_i: \{-1,1\}^n \to X$, $i=1, \dots, n$, $p\in [1, \infty)$, we have
\begin{equation}
\label{Deltafi}
\bE\|\sum_{i=1}^n \Delta^{-a}D_i f_i\|_X^p \le  C \bE\|\sum_{i=1}^n \delta_i f_i\|_X^p
\end{equation}
for all $a$ such that $1-\frac1{\max(p,q)}<a \le 1$ if and only if $(X,\|\cdot\|)$ be a Banach space of {\it finite co-type} $q$. Here $\delta_i$ are standard Rademacher random variables independent of r. v.  $\{\eps\}_{i=1}^n$. If this is true for $a=1$, it is true for all $a$ such that $1-\frac1{\max(p,q)}<a \le 1$.
\end{thm}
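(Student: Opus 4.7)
The statement packages three claims: sufficiency of co-type $q$, necessity of co-type $q$, and the reduction asserting that the endpoint case $a=1$ implies the whole range.

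\emph{Sufficiency} ($\Leftarrow$). For $a$ in the open interval $(1 - 1/\max(p,q),\, 1)$ this is essentially a re-parameterization of Theorem~\ref{simple1}: set $\rho := 1-a$ and $\eps := 1/\max(p,q) - \rho$, both positive under the hypothesis on $a$, and invoke Theorem~\ref{simple1} verbatim. The resulting constant depends on $p$, $q$, $\eps$, and the co-type constant of $X$, but not on $n$.

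\emph{Necessity} ($\Rightarrow$). The plan is to feed \eqref{Deltafi} carefully chosen test functions and thereby recover the co-type $q$ inequality. The naive choice $f_i(\eps) = \eps_i x_i$ is trivial because $(-\Delta)^{-a}\eps_i = \eps_i$ makes the two sides collapse to the same quantity. A more informative choice is $f_i(\eps) = \eps_S(\eps)\, x_i$ for a Walsh character $\eps_S$ of fixed degree $|S|$ and vectors $x_i \in X$: since $D_i \eps_S = \eps_S \mathbf{1}_{\{i \in S\}}$ and $(-\Delta)^{-a}\eps_S = |S|^{-a}\eps_S$, the inequality reduces (using $|\eps_S|=1$) to $|S|^{-a}\bigl\|\sum_{i\in S} x_i\bigr\| \le C \bigl(\bE_\delta \bigl\|\sum_{i\in S} \delta_i x_i\bigr\|^p\bigr)^{1/p}$ for every $S \subseteq [n]$ and every vector family $(x_i)_{i\in S}$. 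Scanning over $S$ and invoking the Maurey--Pisier characterization of finite co-type (absence of uniform $\ell_\infty^n$-embeddings) converts this family of per-subset estimates into the defining co-type $q$ inequality.

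\emph{Reduction $a=1 \Rightarrow$ all $a$ in the range}. Use the subordination identity $(-\Delta)^{-a} = \Gamma(a)^{-1}\int_0^\infty t^{a-1} P_t\, dt$ for $a\in(0,1)$, so that $\sum_i (-\Delta)^{-a} D_i f_i = \Gamma(a)^{-1} \int_0^\infty t^{a-1} \sum_i P_t D_i f_i\, dt$. Formula \eqref{main-f} rewrites each $P_t D_i f_i$ as an explicit expectation involving $\delta_i(t)$, and the $a=1$ hypothesis, combined with the symmetrization and contraction arguments from the proof of Theorem~\ref{simple1}, controls the integrand at each fixed $t$. The $t$-integral converges at $t=0$ precisely under the threshold $a > 1 - 1/\max(p,q)$.

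The main obstacle is the necessity direction: converting the per-subset estimate produced by testing against $f_i = \eps_S x_i$ into the full co-type $q$ statement $(\sum_i \|x_i\|^q)^{1/q} \le C\,\bE\|\sum_i \delta_i x_i\|$. This step is where a Maurey--Pisier-type extremal construction (or an equivalent description of co-type via extremal embeddings) enters, and it constitutes the delicate part of the argument.
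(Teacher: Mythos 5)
Your sufficiency argument is essentially the paper's: Theorem \ref{simple1} with $\rho = 1-a$ gives the conclusion on exactly the stated range, and your parameterization is correct (note only that $a=1$, i.e.\ $\rho=0$, is also covered, so the interval should be $(1-1/\max(p,q),\,1]$). The issue is with the other two pieces.

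\textbf{Necessity.} Your test functions $f_i = \eps_S x_i$ (with $x_i=0$ for $i\notin S$) do not see co-type. After the reduction you describe, they yield only
\[
|S|^{-a}\Big\|\sum_{i\in S}x_i\Big\| \le C\Big(\bE_\delta\Big\|\sum_{i\in S}\delta_i x_i\Big\|^p\Big)^{1/p},
\]
which is a statement about a \emph{single} linear functional of the $x_i$'s on the left against a Rademacher average on the right. For $X$ containing $\ell_\infty^m$'s uniformly (the Maurey--Pisier obstruction to finite co-type) and $x_i = e_i\in\ell_\infty^m$, $S=[m]$, this becomes $m^{-a}\le C$, which is vacuous for every $a>0$. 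So no amount of ``scanning over $S$'' will produce a contradiction from these test functions; the multiple of a single Walsh character is precisely the degenerate case where all the $D_i f_i$ point in the same $\eps$-direction, and that is exactly the geometry that \emph{cannot} distinguish $\ell_\infty$ from a finite co-type space. You flagged this step yourself as ``the delicate part,'' and indeed it is a genuine gap: the paper circumvents it entirely by choosing $f_i = D_i(f-\bE f)$, for which $\sum_i \Delta^{-1}D_i f_i = -(f-\bE f)$, so that \eqref{Deltafi} at $a=1$ becomes exactly Pisier's inequality \eqref{const2f}; one then quotes Talagrand's example \cite{T93} to conclude that Pisier's inequality with an $n$-free constant forces finite co-type. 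That reduction is one line and does not require any Maurey--Pisier extremal machinery.

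\textbf{The implication $a=1\Rightarrow$ all $a$ in the range.} Your subordination proposal is not actually a reduction from the $a=1$ hypothesis. The integrand estimate you invoke (``symmetrization and contraction arguments from the proof of Theorem \ref{simple1}'') is precisely the finite co-type argument, and the convergence threshold $a>1-1/\max(p,q)$ already presupposes knowledge of $q$. So either you are re-proving Theorem \ref{simple1} from scratch (in which case the $a=1$ hypothesis is unused), or you are assuming finite co-type (in which case there is nothing to reduce). The paper's logic is the honest one and is purely a chain of implications: $a=1$ case $\Rightarrow$ Pisier's inequality $\Rightarrow$ (Talagrand) $X$ has finite co-type $q$ $\Rightarrow$ (Theorem \ref{simple1}) the inequality holds for all $a\in(1-1/\max(p,q),1]$. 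You should replace your points two and three by this chain.
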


\begin{rem}
In \cite{HN} this inequality for $a=1$ and was proved  for $UMD^+$ Banach spaces.
\end{rem}

\begin{proof}
The proof of \eqref{Deltafi} for all finite co-type Banach spaces  $X$ follows immediately from Theorem \ref{simple1}.

If one chooses $a=1$ and $f_i= \Delta^{-1} D_i (f-\bE f)$, then one restores  Proposition 4.2 of \cite{IVHV}. In other words, this becomes Pisier inequality \cite{P} with constant independent of $n$ valid for all Banach spaces of finite co-type. We know by Talagrand's example that Pisier inequality can hold {\it only} for $X$ of finite co-type. So if the above inequality holds, then $X$ has to be of finite co-type and then this inequality will hold for all $a$ such that $1-\frac1{\max(p,q)}<a \le 1$ by Theorem \ref{simple1}.
\end{proof}


\subsection{Scalar functions, $1\le p\le2$}
\label{1p2}

$$
\bE_\eps \Big| \bE_\xi \sum_i \delta_i(t)  D_i f_i (\eps\xi(t))\Big|^p  \le \bE_\eps  \bE_\xi\Big| \sum_i \delta_i(t)  D_i f_i (\eps\xi(t))  \Big|^p  = \bE_\xi\bE_\eps\Big| \sum_i \delta_i(t)  D_i f_i (\eps\xi(t))  \Big|^p   =
$$
$$
 \bE_\xi\bE_\eps\Big| \sum_i \delta_i(t)  D_i f_i (\eps)  \Big|^p   =\bE_\eps\bE_\xi\Big| \sum_i \delta_i(t)  D_i f_i (\eps)  \Big|^p  \le^{Holder, \, p\le 2}
 $$
 $$
 \bE_\eps\Big(\bE_\xi\Big| \sum_i \delta_i(t)  D_i f_i (\eps)  \Big|^2  \Big)^{p/2}= \bE_\eps \Big(\sum_i (D_if_i)^2\Big)^{p/2}\asymp
 $$
 $$
 \bE_\eps \bE_\delta \Big| \sum_i \delta_i D_i f_i\Big|^p\,.
 $$
 Thus we just proved (trivially as the reader just saw) inequality \eqref{Exi} for scalar functions and in the regime $1\le p\le 2$. This proves the analog of Theorem \ref{DPt}.
 \begin{thm}
 \label{1p2-th}
 Let $1\le p\le2$, then for scalar valued function we have
 \begin{equation}
\label{Ptp2}
\bE_\eps \|\sum_i D_i P_t f_i\|^p \le C_p\, \Big(\frac{1}{\sqrt{e^{2t}-1}}\Big)^{p}\bE_{\delta, \eps} \|\sum_i \delta_i D_i f_i\|^p\,.
\end{equation}
\end{thm}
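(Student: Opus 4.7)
The plan is to start from the probabilistic representation \eqref{main-f} and produce a pointwise-in-$\eps$ identity for $\sum_j D_j P_tf_j$. Applying \eqref{main-f} with $f$ replaced by $D_j f_j$ and using $D_j^2=D_j$ gives
$$
D_j P_t f_j(\eps)=P_t D_j f_j(\eps)=\frac{e^{-t}}{\sqrt{1-e^{-2t}}}\,\bE_\xi\!\left[\delta_j(t)\,D_j f_j(\eps\xi(t))\right].
$$
Summing over $j$ and noting $e^{-t}/\sqrt{1-e^{-2t}}=1/\sqrt{e^{2t}-1}$, I obtain
$$
\sum_j D_j P_t f_j(\eps)=\frac{1}{\sqrt{e^{2t}-1}}\,\bE_\xi\!\left[\sum_j \delta_j(t)\,D_j f_j(\eps\xi(t))\right].
$$
So the whole theorem reduces to controlling the $L^p(\eps)$-norm of the $\bE_\xi$ expression by $\bE_{\delta,\eps}|\sum_i\delta_i D_if_i|^p$ with a constant independent of $t$ and $n$.

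For this reduction I would proceed exactly along the chain of inequalities displayed just above the statement. First, Jensen brings $|\cdot|^p$ inside $\bE_\xi$. Second, for each fixed $\xi$ the map $\eps\mapsto\eps\xi$ preserves the distribution of $\eps$, so I can replace $\eps\xi(t)$ by $\eps$ in the argument of $D_jf_j$ without changing the expectation; this lets me pull the $\xi$-average out of $D_jf_j$ entirely, leaving only the coefficients $\delta_j(t)$ depending on $\xi$. Third, since $p\le 2$, Hölder's inequality lets me replace $\bE_\xi|\cdot|^p$ by $(\bE_\xi|\cdot|^2)^{p/2}$.

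The $\xi$-variance is trivial to compute in the scalar case: because $\delta_i(t)$ are independent with mean $0$ and variance $1$,
$$
\bE_\xi\Big|\sum_i \delta_i(t)D_if_i(\eps)\Big|^2=\sum_i (D_if_i(\eps))^2,
$$
which is precisely where the scalar hypothesis enters (no orthogonality of this kind holds in a general Banach space). Finally, the classical Khintchine inequality for scalar Rademacher sums converts $(\sum_i (D_if_i)^2)^{p/2}$ back into $\bE_\delta|\sum_i \delta_i D_if_i|^p$, up to a constant $C_p$, completing the estimate.

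I do not anticipate any genuine obstacle: every step is elementary. The only subtle points are (i) remembering to insert $D_j$ before using \eqref{main-f}, which is essential so that $D_jP_tf_j$ rather than $P_tD_jf_j$ appears and so that $D_j^2=D_j$ can collapse the extra derivative, and (ii) the use of Hölder in the $\xi$-variable, which forces the restriction $p\le 2$ and explains why the Banach-valued extension requires a different argument (for example, the quantum random-variable method of \cite{ELP} discussed in Section~\ref{qua}).
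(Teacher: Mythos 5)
Your proposal is correct and follows essentially the same route as the paper's: Jensen to bring the $p$-th power inside $\bE_\xi$, a Fubini/change-of-variable to replace $\eps\xi(t)$ by $\eps$, H\"older in the $\xi$-variable (using $p\le 2$), the orthogonality $\bE_\xi\bigl|\sum_i\delta_i(t)D_if_i\bigr|^2=\sum_i(D_if_i)^2$, and Khintchine. The only cosmetic difference is that you spell out the pointwise identity $\sum_j D_jP_tf_j=(e^{2t}-1)^{-1/2}\,\bE_\xi\bigl[\sum_j\delta_j(t)D_jf_j(\eps\xi(t))\bigr]$ from \eqref{main-f} before running the chain, whereas the paper takes this as understood.
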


\subsection{Regime $1\le q, p\le 2$}
\label{1qp2}

We follow exactly the same steps as in Subsection \ref{1p2}, but instead of absolute value we have now $L^q(\nu)$-norm, that we denote by $\Big|\cdot\Big|_q$.

$$
\bE_\eps \Big| \bE_\xi \sum_i \delta_i(t)  D_i f_i (\eps\xi(t))\Big|_q^p  \le \bE_\eps  \bE_\xi\Big| \sum_i \delta_i(t)  D_i f_i (\eps\xi(t))  \Big|_q^p  = \bE_\xi\bE_\eps\Big| \sum_i \delta_i(t)  D_i f_i (\eps\xi(t))  \Big|_q^p   =
$$
$$
 \bE_\xi\bE_\eps\Big| \sum_i \delta_i(t)  D_i f_i (\eps)  \Big|_q^p   =\bE_\eps\bE_\xi\Big| \sum_i \delta_i(t)  D_i f_i (\eps)  \Big|_q^p  \le^{Holder, \, p\le 2}
 $$
 $$
 \bE_\eps\Big(\bE_\xi\Big| \sum_i \delta_i(t)  D_i f_i (\eps)  \Big|_q^2  \Big)^{p/2}\,.
  $$
  At this place in Subsection \ref{1p2} we used the orthogonality of $\{\delta_i(t)\}$ with respect to $\bE_\xi$.
  Now we cannot do that because we have $\Big|\cdot \Big|_q^2$ and not just the simple absolute value squared $\Big|\cdot \Big|^2$. 
  However,
  we have shown that
  \begin{equation}
  \label{however}
\bE_\eps \Big| \bE_\xi \sum_i \delta_i(t)  D_i f_i (\eps\xi(t))\Big|_q^p  \le  \bE_\eps\bE_\xi\Big| \sum_i \delta_i(t)  D_i f_i (\eps)  \Big|_q^p \,.
\end{equation}
  
  \bigskip
  
 We use the following contraction principle is a classical result of Maurey and 
Pisier (see, e.g., \cite{P}, Proposition 3.2). We spell out a version 
with explicit constants following \cite{IVHV}.

\begin{thm}
\label{thm:mp}
Let $(X,\|\cdot\|)$ be a Banach space of co-type $q<\infty$,
let $\eta_1,\ldots,\eta_n$ be i.i.d.\ symmetric random variables,
and let $\delta$ be uniformly distributed on $\{-1,1\}^n$. Then
for any $n\ge 1$, $x_1,\ldots,x_n\in X$, and $1\le p<\infty$, we have
$$
	\Bigg(
	\mathbf{E}\Bigg\|\sum_{j=1}^n \eta_j x_j\Bigg\|^p\Bigg)^{1/p}
	\le
	L_{q,p}
	\int_0^\infty \mathbf{P}\{|\eta_1|>t\}^{\frac{1}{\max(q,p)}}dt\,
	\Bigg(
	\mathbf{E}\Bigg\|\sum_{j=1}^n \delta_j x_j\Bigg\|^p
	\Bigg)^{1/p}
$$
with $L_{q,p}=L\,C_q(X)\max(1,(q/p)^{1/2})$,
where $L$ is a universal constant.
\end{thm}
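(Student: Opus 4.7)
The plan is to decompose each symmetric $\eta_j$ into sign and amplitude, apply the layer-cake identity to the amplitude to reduce to Bernoulli-selected Rademacher sums, and then use the cotype-$q$ assumption via a block-decomposition trick to extract the factor $s(t)^{1/\max(q,p)}$.

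First, since each $\eta_j$ is symmetric, I would write $\eta_j=\varepsilon_j|\eta_j|$ with $\varepsilon_j:=\mathrm{sgn}(\eta_j)$; the $(\varepsilon_j)$ are i.i.d.\ Rademacher and, jointly, independent of $(|\eta_j|)$. Layer-cake on the amplitudes gives
$$\sum_{j=1}^n \eta_j x_j=\int_0^\infty \sum_{j=1}^n \varepsilon_j B_j(t)\, x_j\,dt,\qquad B_j(t):=\mathbf{1}_{\{|\eta_j|>t\}},$$
where for each fixed $t$ the $(B_j(t))_j$ are i.i.d.\ Bernoulli with parameter $s(t):=\mathbf{P}\{|\eta_1|>t\}$, independent of $(\varepsilon_j)$. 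Taking the $L^p(X)$-norm and applying Minkowski's integral inequality reduces the theorem to the pointwise Bernoulli-selection estimate
$$\Big(\mathbf{E}\Big\|\sum_j\varepsilon_j B_j x_j\Big\|^p\Big)^{1/p}\le L\,C_q(X)\max(1,(q/p)^{1/2})\, s^{1/\max(q,p)}\Big(\mathbf{E}\Big\|\sum_j\delta_j x_j\Big\|^p\Big)^{1/p}$$
valid uniformly for $(B_j)$ i.i.d.\ Bernoulli$(s)$ independent of $(\varepsilon_j)$.

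To prove this Bernoulli-selection inequality, I would first invoke Kahane's inequality to replace $p$ by $r:=\max(q,p)$ on both sides, at the cost of a Kahane constant $\asymp\max(1,(q/p)^{1/2})$---this is precisely the source of the $\max(1,(q/p)^{1/2})$ factor in $L_{q,p}$. What remains is to show $\mathbf{E}\|\sum_j\varepsilon_j B_j x_j\|^r\le L^r C_q(X)^r\,s\,\mathbf{E}\|\sum_j\varepsilon_j x_j\|^r$ for $r\ge q$. Partition $\{1,\dots,n\}$ into $M:=\lceil 1/s\rceil$ nearly-equal blocks $A_1,\dots,A_M$ and apply cotype-$q$ to the block sums $y_k:=\sum_{j\in A_k}\varepsilon_j x_j$ using auxiliary Rademacher signs $\sigma_k$: since the products $\sigma_{k(j)}\varepsilon_j$ recombine into a single Rademacher sequence, one obtains $\sum_k\|y_k\|^q\le C_q(X)^q\,\mathbf{E}_{\varepsilon'}\|\sum_j\varepsilon_j' x_j\|^q$ pointwise in $\varepsilon$. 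The embedding $\|\cdot\|_{\ell^r}\le\|\cdot\|_{\ell^q}$ together with Jensen's inequality upgrades this to the $r$-th moment. Averaging over a uniformly random permutation of $\{1,\dots,n\}$ and using exchangeability of the blocks yields an estimate on the $r$-th Rademacher moment of a uniform random subset of size $|A_1|\sim n/M\sim ns$; conditioning on the cardinality $|\{j:B_j=1\}|\sim\mathrm{Bin}(n,s)$ and using $\mathbf{E}|B|=ns$ then converts this to the desired Bernoulli bound with the factor $s$.

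The main obstacle is this last block-decomposition step: the naive contraction principle (since $B_j\in\{0,1\}$) yields only factor $1$, not $s$, in the Bernoulli selection, so one genuinely needs the cotype hypothesis to extract the $s^{1/r}$. Careful bookkeeping of the Kahane constant produces the explicit dependence $L_{q,p}=L\,C_q(X)\max(1,(q/p)^{1/2})$; the minor divisibility issue when $M$ does not divide $n$ is handled by mixing block sizes $\lfloor n/M\rfloor$ and $\lceil n/M\rceil$, which affects only lower-order constants.
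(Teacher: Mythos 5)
The paper does not prove this theorem at all: it quotes it as the classical Maurey--Pisier contraction principle, citing Proposition 3.2 of \cite{P} and referring to \cite{IVHV} for the explicit constants, so there is no competing proof in the paper to compare yours against. Your argument is exactly the classical proof --- symmetrize, layer-cake, Minkowski's integral inequality, Kahane to pass from $p$ to $r=\max(q,p)$ at cost $\asymp\max(1,\sqrt{q/p})$, and then cotype via a block decomposition with averaging over random partitions to extract the $s^{1/r}$ --- and the overall strategy is correct. Two steps in the write-up are stated loosely and should be tightened. First, the inequality $\sum_k\|y_k\|^q\le C_q(X)^q\,\mathbf{E}_{\varepsilon'}\|\sum_j\varepsilon_j'x_j\|^q$ does not hold pointwise in $\varepsilon$: what cotype gives for fixed $\varepsilon$ is $\sum_k\|y_k\|^q\le C_q(X)^q\,\mathbf{E}_\sigma\|\sum_j\sigma_{k(j)}\varepsilon_jx_j\|^q$, whose right-hand side still depends on $\varepsilon$ because, conditional on $\varepsilon$, the block-coherent signs $(\sigma_{k(j)}\varepsilon_j)_j$ are not an i.i.d.\ Rademacher sequence; the identification with $\mathbf{E}_{\varepsilon'}\|\sum_j\varepsilon_j'x_j\|^q$ is available only after one also averages over $\varepsilon$, which must therefore be performed after (not before) the $\ell^q\hookrightarrow\ell^r$ step and Jensen. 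Second, since you condition on the cardinality $m=|\{j:B_j=1\}|$, the number of blocks must be taken as $M_m\approx n/m$ for each $m$, not frozen at $\lceil 1/s\rceil$: only a fixed-size estimate that holds uniformly in $m$ with factor $m/n$ can be integrated against the binomial law of $|B|$ (using $\mathbf{E}|B|=ns$) to produce the Bernoulli bound with factor $s$. Both points are routine to repair and do not change the final constant $L\,C_q(X)\max(1,(q/p)^{1/2})$.
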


Let $\xi'(t)$ be an independent copy of $\xi(t)$.
We first note that 
$$
	\int_0^\infty \mathbf{P}\{|\xi_j(t)-\xi_j'(t)|>s\}^{1/r}ds
	=
	2^{1-1/r} (1-e^{-2t})^{1/r}.
$$
Our Banach space $X=L^q(\nu)$, $1\le q\le 2$ so the co-type is $2$.
We will use this now with $r=\max(p, 2)$.
Then the right hand side of \eqref{however} can be estimated as follows:
\begin{align*}
        \bE_\eps\bE_\xi
        \Big|\sum_{j=1}^n
        \delta_j(t) D_jf(\varepsilon)\Big|_q^p
	&\le
	\frac{1}{\sqrt{1-e^{-2t}}^{p}}
        \bE_\eps\bE_\xi
        \Big|\sum_{j=1}^n
        (\xi_j(t)-\xi_j'(t)) D_jf(\varepsilon)\Big|_q^p
	\\
	&\le
	2L_{q,p} 
	(1-e^{-2t})^{\frac{p}{\max(2,p)}-\frac{p}{2}}
        \bE_\eps\bE_\delta
        \Big|\sum_{j=1}^n
        \delta_j D_jf(\varepsilon)\Big|_q^p,
\end{align*}
where we used Jensen's inequality in the first line and we applied
Theorem \ref{thm:mp} in the second line. Notice that the power $\frac{p}{\max(2,p)}-\frac{p}{2}=0$ of course as $p\le 2$ now.

Combining the latter inequality and \eqref{however} we conclude that inequality \eqref{Exi} is proved also in the regime $X=L^q(\nu)$, $1\le q, p\le 2$.


\section{Pisier type inequalities with $X$-valued functions and ultimate assumptions on $X$}
\label{Pisier}

In this Section we are going to consider results of the following nature:
\begin{thm}
\label{co-type1}
For any functions $f_i: \{-1,1\}^n \to X$, $i=1, \dots, n$, $p\in [1, \infty)$, we have
\begin{equation}
\label{Deltafi}
\bE\|\sum_{i=1}^n \Delta^{-1}D_i f_i\|_X^p \le  C(q, p) \bE\|\sum_{i=1}^n \delta_i  f_i\|_X^p
\end{equation}
if and only if $(X,\|\cdot\|)$ be a Banach space of {\it finite co-type} $q$.
\end{thm}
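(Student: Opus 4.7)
The plan is to establish the two implications of the equivalence separately; both are essentially by-products of material already developed in this section.

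For sufficiency (finite cotype $q$ implies \eqref{Deltafi}), I would rerun the argument of Theorem \ref{simple1} at the endpoint $\rho = 0$, which corresponds to $a = 1$. The proof of Theorem \ref{simple1} carries over verbatim provided the time integral
$$\int_0^\infty \frac{e^{-t}}{(1-e^{-2t})^{1-1/\max(q,p)}}\,dt$$
is finite. This is the case, because near $t = 0$ the integrand is controlled by a constant multiple of $t^{-(1-1/\max(q,p))}$ with exponent strictly less than $1$, while $e^{-t}$ supplies exponential decay at infinity. The resulting bound has the shape $C(q,p) \lesssim C_q(X)\cdot p$ for $p \ge q$, exactly as recorded in \eqref{const2} of Remark \ref{const1}.

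For necessity, I would specialize \eqref{Deltafi} to the family $f_i = D_i f$ for an arbitrary $f:\{-1,1\}^n \to X$. Using the identities $D_i^2 = D_i$ and $\sum_i D_i = -\Delta$, together with the fact that $\Delta$ is invertible on mean-zero functions, the left-hand side collapses:
$$\bE\Big\|\Delta^{-1}\sum_i D_i^2 f\Big\|_X^p = \bE\Big\|\Delta^{-1}\sum_i D_i f\Big\|_X^p = \bE\|f - \bE f\|_X^p.$$
Thus \eqref{Deltafi} reduces to the classical Pisier inequality
$$\bE\|f - \bE f\|_X^p \le C(q,p)\, \bE\Big\|\sum_i \delta_i D_i f\Big\|_X^p$$
with constant independent of $n$. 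By Talagrand's counterexample, such a dimension-free Pisier inequality can hold only when $X$ has finite cotype, which forces the conclusion.

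I expect the main obstacle to be purely bookkeeping rather than substantive. The endpoint $\rho = 0$ in Theorem \ref{simple1} requires only the integrability check above; the Maurey--Pisier contraction (Theorem \ref{thm:mp}) and the rest of the estimation go through without modification. The actual content of the theorem lies on the necessity side, which rests on the external Talagrand rigidity result for Pisier's inequality; in our setup, the role of this section is simply to identify the correct test family $f_i = D_i f$ that maps \eqref{Deltafi} onto the classical Pisier inequality.
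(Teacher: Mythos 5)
Your proposal is correct and follows essentially the same route as the paper: sufficiency is the time-integral argument of Theorem \ref{simple1} at the endpoint $\rho = 0$ (i.e., $a=1$), where the only new point is the integrability check you carry out, while necessity reduces \eqref{Deltafi} to the classical Pisier inequality and invokes Talagrand's rigidity. The one cosmetic difference is your test family $f_i = D_i f$ versus the paper's $f_i = \Delta^{-1}D_i(f-\mathbf{E}f)$; both substitutions collapse \eqref{Deltafi} to the same dimension-free Pisier inequality, with yours being slightly more direct.
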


It is very well known that for scalar  functions with zero average and any $1<p<\infty$, $\al>0$,
\begin{equation}
\label{Da}
\|f\|_p\le C\|\Delta^\al f\|\,, \text{or}\,\, \|\Delta^{-\al_1}f\|_p\le C\|\Delta^{-\al_2} f\|, \,\, \al_1>\al_2\ge 0\,.
\end{equation} 
This follows from exponential contraction of semigroup $P_t$ ($k>0$ below)
\begin{equation}
\label{cPt}
\|P_t f\|_p \le e^{-kt} \|f\|_p, \quad \forall t>0\,.
\end{equation}

\begin{rem}
\label{Xnontrivtype}
But only slightly worse exponential estimate holds for $X$-valued functions, if $X$ is of non-trivial type.
This is Theorem 5.1 in \cite{MN}. Moreover, such decay of $P_t$ in $L^p$-metric (for any $1<p<\infty$)  when $t\to\infty$ is equivalent to $X$ being of non-trivial type, see Theorem 5.2 \cite{MN}.
 \end{rem}

However, this implication happens only for $X$ of non-trivial type, and the scope of Theorem \ref{co-type1} is all spaces of finite co-type. 
\bigskip

Theorem \ref{co-type1} is proved in \cite{IVHV} but we repeat it here.

\begin{proof}
The proof of \eqref{Deltafi} for all finite co-type Banach spaces  $X$ follows immediately from the formula
\begin{equation}
\label{main}
e^{-t\Delta} D_j f (\eps)= \frac{e^{-t}}{\sqrt{1-e^{-2t}}} \bE_\xi \Big[\frac{\xi_j(t) -e^{-t}}{\sqrt{1-e^{-2t}}} f(\eps \xi(t))\Big]\,,
\end{equation}
where $\xi_i(t)$ assume values $\pm 1$ with probability $\frac12(1\pm e^{-t})$, and are mutually independent and independent from $\eps_j, j=1, \dots, n$. This is formula  from Section \ref{f}, see also Lemma 2.1 of \cite{IVHV}.

Adding and applying $\Delta= \Delta_\eps$, we get (we denote $\delta_j(t):= \frac{\xi_j(t) -e^{-t}}{\sqrt{1-e^{-2t}}}$)

$$
\Delta e^{-t\Delta} \sum_{j=1}^n D_j f_j (\eps)= \frac{e^{-t}}{\sqrt{1-e^{-2t}}} \bE_\xi \Big[\sum_{j=1}^n \delta_j(t) [\Delta f_j](\eps \xi(t))\Big]\,,
$$
After integrating in $t$, we get
$$
-\sum_{j=1}^n D_j f_j (\eps) = \int_0^\infty  \bE_\xi \Big[\sum_{j=1}^n \delta_j(t) [\Delta f_j](\eps \xi(t))\Big]\, \frac{e^{-t}}{\sqrt{1-e^{-2t}}} dt\,.
$$
Hence, introducing symmetrization by means of $\{\xi_i'(t)\}$ independent from all $\{\xi_j\}_{j=1}^n$ and having the same distribution as $\xi_i$, $i=1, \dots, n$, we write
$$
(\bE_\eps \|\sum_{j=1}^n D_j f_j (\eps)\|^p)^{1/p} \le \int_0^\infty \Big(\bE_\xi\bE_\eps \Big\|\sum_{j=1}^n \delta_j(t) [\Delta f_j](\eps \xi(t))\Big\|^p\Big)^{1/p} \, \frac{e^{-t}}{\sqrt{1-e^{-2t}}} dt=
$$
$$
 \int_0^\infty \Big(\bE_\xi\bE_\eps \Big\|\sum_{j=1}^n \delta_j(t) [\Delta f_j](\eps )\Big\|^p\Big)^{1/p} \, \frac{e^{-t}}{\sqrt{1-e^{-2t}}} dt,
$$
where we used that for every fixed $t, \xi(t)$ the distribution of $\eps\to [\Delta f_j](\eps\xi (t))$ is the same as that of $\eps\to [\Delta f_j](\eps )$. We continue:
$$
 \int_0^\infty \Big(\bE_\eps \bE_{\xi, \xi'} \Big\|\sum_{j=1}^n  \frac{\xi_j(t) -\xi'_j(t)}{\sqrt{1-e^{-2t}}} [\Delta f_j](\eps )\Big\|^p\Big)^{1/p} \, \frac{e^{-t}}{\sqrt{1-e^{-2t}}} dt\le
$$
$$
C(q, p)  \int_0^\infty \Big(\bE_\eps \bE_\delta \Big\|\sum_{j=1}^n \delta_j [\Delta f_j](\eps )\Big\|^p\Big)^{1/p} \, \frac{e^{-t}}{(1-e^{-2t})^{1-\frac{1}{\max (q, p)} }}dt\,,
$$
where we used Theorem 4.1 from \cite{IVHV} and the fact that the co-type of $X$ is $q<\infty$. The last expression is bounded by $C(q, p) \max (q, p)\Big( \bE_\eps \bE_\delta \Big\|\sum_{j=1}^n \delta_j [\Delta f_j](\eps )\Big\|^p\Big)^{1/p}$, and we are done.

Notice that, as it follows from \cite{IVHV}, the condition of having finite co-type is not only sufficient for inequality \eqref{Deltafi} to hold, but it is also necessary.

\end{proof}

\subsection{Pisier inequality's constant}
\label{log}

For any Banach space $X$ with no restriction Pisier's inequality claims
\begin{equation}
\label{Pisier}
(\bE\|f-\bE f\|^p)^{1/p} \le  C(n) (\bE\|\sum_{i=1}^n \delta_i \Delta f_i\|^p)^{1/p}\,,
\end{equation}
where
$$
C(n) \le C \log n\,.
$$
In \cite{HN} it was shown that
\begin{equation}
\label{logC}
C(n) \le  \log n + C\,.
\end{equation}
We will show, using the formula from Section \ref{f},  that 
\begin{equation}
\label{lolo}
C(n) \le \log n + \log\log n +C\,,
\end{equation}

\begin{rem}
This is still worse than in  in \cite{HN}, where a very elegant proof of \eqref{logC} is given. So far we cannot get this result of \cite{HN} by our method.
\end{rem}



To show \eqref{lolo} we notice that for any $p$, $\|D_j f\|_p \le \|f\|_p$, and, hence, 
$$
\|\Delta f \|_p \le n\|f\|_p\,.
$$
This gives us immediately and for any $\tau >0$ and  $f: \,\bE f=0$ that $\|P_{-\tau} f\|_p \le e^{\tau n} \|f\|_p$, or
\begin{equation}
\label{compare}
\|f\|_p \le e^{\tau n} \|P_\tau f\|_p\,.
\end{equation}

Notice several things.
Firstly we already proved  for $f$ with $\bE f=0$ that
\begin{equation}
\label{pi}
(\bE\|P_\tau f\|^p)^{1/p} \le \int_\tau^\infty \frac{e^{-t}}{1-e^{-2t}} \Big( \bE_\eps \bE_{\xi, \xi'} \|\sum_{j=1}^n (\xi_j(t)-\xi_j'(t)) D_j f(\eps)\|^p\Big)^{1/p} dt\,,
\end{equation}
where $\xi_j(t)$ were introduced above.
By Proposition 3.2.10 (Kahane contraction principle) of \cite{HVNVW}  we can extend this estimate (everything is real valued):
\begin{equation}
\label{pi1}
(\bE\|P_\tau f\|^p)^{1/p} \le 2\int_\tau^\infty \frac{e^{-t}}{1-e^{-2t}} \Big( \bE_\eps \bE_{\delta} \|\sum_{j=1}^n \delta_j D_j f(\eps)\|^p\Big)^{1/p} dt\,,
\end{equation}
where $\{\delta_j\}_{j=1}^n$ is another set of Rademacher standard variables, independent from $\{\eps_i\}_{i=1}^n$. Obviously, inequality \eqref{pi1} can be rewritten as
\begin{equation}
\label{pi2}
(\bE\|P_\tau f\|^p)^{1/p} \le 2\Big(\frac12\log \frac{1+ e^{-\tau}}{1-e^{-\tau}}\Big)\Big( \bE_\eps \bE_{\delta} \|\sum_{j=1}^n \delta_j D_j f(\eps)\|^p\Big)^{1/p} \,.
\end{equation}

We are left to compare $(\bE\|P_\tau f\|^p)^{1/p}$ and $(\bE\| f\|^p)^{1/p}$ as in \eqref{compare}.

Now we combine \eqref{compare} with \eqref{pi2} to get  
$$
C(n) \le \min_{\tau>0} e^{\tau n} \frac{1+ e^{-\tau}}{1-e^{-\tau}} = \min_{0<r<1} r^{-n} \frac{1+ r}{1-r} = \log n +\log\log n +C\,.
$$
\begin{rem}
Looking at Section 6 in Talagrand's paper \cite{T93} one can notice, that (by twitching the example just a tiny bit), one gets the estimate from below of Pisier's constant:
$$
C(n) \ge (\frac12 - \delta) \log n - C_\delta\,.
$$
\end{rem}

\subsection{Yet another generalization of Pisier's inequality}
\label{F}

Let $F$ be a function of $\{-1, 1\}^n\times \{-1, 1\}^n$ with values in the Banach space $X$, and 
$F_j(\eps) = \bE_\delta \delta_j F(\eps, \delta)$.  For the special case $F(\eps, \delta) =\sum_{j=1}^n F_j(\eps) \delta_j$, inequality
\begin{equation}
\label{F1}
(\bE_\eps\|\sum_{j=1}^n \Delta^{-1} D_j F_j\|^p)^{1/p} \le C(p, n) (\bE_{\delta, \eps} \|F\|^p)^{1/p},\, 1\le p\le  \infty,
\end{equation}
is exactly \eqref{Deltafi}. For such  very special $F=\sum_{j=1}^n F_j(\eps) \delta_j$ we know three things: 1) for general Banach space $X$, $C(p, n) \lesssim \log n$, 2) this is sharp growth, 3) $C(p, n)\le C(p, q)<\infty$ if $X$ is of finite co-type and if $X$ is not of finite co-type, the constant  can grow logarithmically with $n$, \cite{T93}.

\bigskip

However, it is interesting to ask for general function $F(\eps, \delta)$ not just $ =\sum_{j=1}^n F_j(\eps) \delta_j$ what happens with \eqref{F1}, namely, 

A) for what Banach spaces constant does not depend on $n$? 

B) what is the worst growth of constant with $n$ for general Banach space $X$?

\bigskip

\begin{rem}
In \cite{HN} the example of co-type $2$ space is considered, namely, $X= L^1(\{-1, 1\}^n)$, for which constant grows at least as $\sqrt{n}$.  Below we show that this is the worst behavior for an arbitrary Banach space  {\it of finite co-type}. Thus, conceptually, \eqref{F1} turns out to be very different from  \eqref{Deltafi} or from the original Pisier inequality.  
\end{rem}

\bigskip

Before formulating theorem let us consider the dual inequality to \eqref{F1}:
\begin{equation}
\label{dF}
\bE_{\delta, \eps}\|\sum_{j=1}^n \delta_j \Delta^{-1} D_j g(\eps)\|^p \le C(p, n) \bE_\eps\|g\|^p, \, 1\le p\le \infty\,.
\end{equation}
In cases when $C(p, n)<\infty$ independent of $n$ and for $1<p<\infty$, this is one of the  {\it  typical Riesz projection} inequalities.

\begin{thm}
\label{F1thm}
Let $X$ be of finite co-type $q$. Then inequality \eqref{F1} \textup(and thus \eqref{dF}\textup) holds with constant $C(p, q)\sqrt{n}$.
The growth of constant cannot be improved in this class of $X$.
\end{thm}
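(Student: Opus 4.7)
The proof factors cleanly into two independent estimates: an application of Theorem \ref{co-type} (where the cotype of $X$ enters), composed with a $\sqrt n$-bound on the first-level Rademacher projection (which is universal, and responsible for the $\sqrt n$-growth). Introduce
$$P_1 F(\eps,\delta):=\sum_{j=1}^n \delta_j F_j(\eps),$$
the first-level projection of $F$ in the Walsh expansion in $\delta$. The left-hand side of \eqref{F1} depends on $F$ only through $P_1F$, because $F_j=\bE_\delta[\delta_j F]$ is precisely the $j$-th linear Walsh coefficient of $F$ in $\delta$.

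\emph{Step 1 (cotype input).} Since $X$ has finite co-type $q$, Theorem \ref{co-type} with $a=1$, applied to the $X$-valued functions $\{F_j\}$ on the cube, yields
$$\Big(\bE_\eps\Big\|\sum_{j=1}^n\Delta^{-1}D_j F_j\Big\|_X^p\Big)^{1/p}\le C(q,p)\,\|P_1F\|_{L^p(X)}.$$
\emph{Step 2 ($\sqrt n$-cost of Rademacher projection).} Writing $\delta'$ for an independent copy of $\delta$, one rewrites
$$P_1 F(\eps,\delta)=\bE_{\delta'}\!\left[\Big(\sum_{j=1}^n \delta_j\delta'_j\Big)F(\eps,\delta')\right].$$
The Banach-space triangle inequality together with Jensen's inequality give
$$\bE_{\eps,\delta}\|P_1 F\|_X^p\le \bE_{\eps,\delta,\delta'}\Big|\sum_j \delta_j\delta'_j\Big|^p\|F(\eps,\delta')\|_X^p.$$
For each fixed $\delta'$, the products $\{\delta_j\delta'_j\}_j$ are i.i.d.\ Rademacher, so Khintchine's inequality gives $\bE_\delta|\sum_j \delta_j\delta'_j|^p\le K_p^p n^{p/2}$, independently of $\delta'$. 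Integrating in $(\delta',\eps)$ now produces $\|P_1 F\|_{L^p(X)}\le K_p\sqrt n\,\|F\|_{L^p(X)}$.

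Composing Steps 1 and 2 gives \eqref{F1} with constant $C(q,p)K_p\sqrt n$, and the dual form \eqref{dF} follows by the usual duality argument. The sharpness of the $\sqrt n$-growth in the class of finite-co-type spaces is already exhibited by the $L^1$-valued example from \cite{HN} that is recalled in the remark preceding the theorem, since $L^1$ is of co-type $2$. I do not see a serious obstacle: the only genuine idea is to notice that \eqref{F1} sees $F$ only through $P_1 F$, which cleanly separates the dimension-free cotype estimate of Theorem \ref{co-type} from the $\sqrt n$-cost of projecting onto the Rademacher chaos, the latter being forced whenever $X$ fails to be $K$-convex (equivalently, has trivial type).
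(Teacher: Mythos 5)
Your proposal is correct, and it offers a cleaner, more modular organization than the paper's own argument, which reruns the semigroup-formula computation from scratch with $F$ in place of the $f_i$. Your key structural observation — that the left side of \eqref{F1} sees $F$ only through the first-level Walsh projection $P_1F=\sum_j\delta_jF_j$ in the $\delta$-variable — lets you invoke Theorem \ref{co-type} (with $a=1$) as a black box for the dimension-free cotype estimate, and then isolate the $\sqrt n$ as the universal $L^p(X)\to L^p(X)$ cost of the Rademacher projection for an arbitrary Banach space, obtained from Khintchine applied to $\bE_\delta\bigl|\sum_j\delta_j\delta'_j\bigr|^p\lesssim n^{p/2}$. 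The paper's proof produces $\sqrt n$ from that very same Khintchine computation, but only after pushing it through the co-type contraction step inside the integral representation of $\Delta^{-1}D_j$; your factorization makes it transparent that co-type is responsible only for the dimension-free passage from $\{F_j\}$ to the Rademacher average, while the $\sqrt n$ loss is entirely attributable to the failure of $K$-convexity (equivalently, trivial type), and neither ingredient interferes with the other. Your treatment of optimality also mirrors the paper's: both rely on the $L^1(\{-1,1\}^n)$-valued example of \cite{HN}, an $X$ of cotype $2$, to show the $\sqrt n$ growth is attained, and neither the paper's proof nor yours supplies a self-contained construction. One small stylistic point: the inequality $\bE_{\eps,\delta}\|P_1F\|_X^p\le\bE_{\eps,\delta,\delta'}\bigl|\sum_j\delta_j\delta'_j\bigr|^p\|F(\eps,\delta')\|_X^p$ is a single application of Jensen for the convex function $\|\cdot\|_X^p$; you do not also need the triangle inequality, though invoking it does no harm.
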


\begin{proof}
We again use the same formula \eqref{main}, now in the following form:

$$
\Delta e^{-t\Delta} \sum_{j=1}^n \Delta^{-1}D_j F_j (\eps)= \frac{e^{-t}}{\sqrt{1-e^{-2t}}} \bE_\xi \Big[\sum_{j=1}^n \delta_j(t) [ F_j](\eps \xi(t))\Big]\,,
$$
Hence
$$
\bE_{\delta, \eps}\|\sum_{j=1}^n \Delta^{-1} D_j F_j\|^p \lesssim\int_0^\infty\frac{e^{-t}}{\sqrt{1-e^{-2t}}}\bE_\delta\bE_\xi\bE_\eps\|\sum_{j=1}^n \delta_j\delta_j(t) F(\eps\xi, \delta)\|^p dt=
$$
$$
\int_0^\infty\frac{e^{-t}}{\sqrt{1-e^{-2t}}}\bE_\delta\bE_\xi\bE_\eps\|\sum_{j=1}^n \delta_j\delta_j(t) F(\eps, \delta)\|^p dt,
$$
where we used that for every fixed $\xi$ the distribution of $\eps\to  F(\eps\xi, \delta)$ is the same as the distribution of $\eps\to  F(\eps \delta)$.
Using now finite co-type as before, we continue to write
\begin{equation}
\label{co-typeF}
\lesssim \int_0^\infty \frac{e^{-t}}{(1-e^{-2t})^{1- \min(1/p, 1/q)}} \bE_\delta\bE_\eps\bE_{\delta'} \|\sum_{j=1}^n \delta_j\delta_j' F(\eps, \delta)\|^p dt \lesssim
\end{equation}
$$
C(p, q) \bE_\delta\Big[\bE_\eps \|F(\eps, \delta)\|^p  \bE_{\delta'}|\sum_{j=1}^n  \delta_j\delta_j' |^p \Big] \le C'(p, q) n^{p/2}\bE_\eps \bE_\delta \|F(\eps, \delta)\|^p\,.
$$

\end{proof}

\begin{thm}
\label{F1type}
Let $X$ be of non-trivial  type $s\in (1, 2]$. Let $1< p<\infty$. Then inequality \eqref{F1} \textup(and thus \eqref{dF}\textup) holds with constant $C(s, p)<\infty$ independently of $n$.
\end{thm}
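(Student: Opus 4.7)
The plan is to factor \eqref{F1} through the original Pisier inequality (Theorem \ref{co-type} with $a=1$) so that the $\sqrt{n}$ loss present in the proof of Theorem \ref{F1thm} is replaced by a constant via the $K$-convexity of $X$. The $\sqrt{n}$ in Theorem \ref{F1thm} appeared only because the substitution $F_j = \mathbf{E}_\delta[\delta_j F]$ was performed \emph{after} invoking the formula~\eqref{main-f}: one was then left with $\sum_j \delta_j\delta_j(t) F(\eps\xi,\delta)$, which factors as a scalar of size $\sqrt{n}$ times the common vector $F(\eps\xi,\delta)$. The present approach avoids that substitution.

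First I would invoke Theorem \ref{co-type} (i.e.\ \eqref{Deltafi} with $a=1$) directly on the $X$-valued family $(F_j)_{j=1}^n$. This is legitimate because every Banach space of non-trivial type $s\in(1,2]$ has finite cotype (Maurey--Pisier), say cotype $q<\infty$; thus
$$
\mathbf{E}_\eps\Big\|\sum_{j=1}^n \Delta^{-1} D_j F_j\Big\|^p \le C(q,p)\, \mathbf{E}_{\eps,\delta}\Big\|\sum_{j=1}^n \delta_j F_j(\eps)\Big\|^p
$$
with constant independent of $n$.

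The second, and crucial, step bounds the right-hand side by $\mathbf{E}_{\eps,\delta}\|F\|^p$. For each fixed $\eps$ the map $\delta\mapsto \sum_j \delta_j F_j(\eps)$ is precisely the first Rademacher chaos projection of $\delta\mapsto F(\eps,\delta)$, since $F_j(\eps) = \mathbf{E}_\delta[\delta_j F(\eps,\delta)]$. By Pisier's $K$-convexity theorem (see \cite{P}), non-trivial type is equivalent to the boundedness of this projection on $L^p(X)$ for $1<p<\infty$, so
$$
\mathbf{E}_\delta\Big\|\sum_{j=1}^n \delta_j F_j(\eps)\Big\|^p \le K_{s,p}^p\, \mathbf{E}_\delta\|F(\eps,\delta)\|^p,
$$
where $K_{s,p}$ depends only on $p$ and the type-$s$ constant of $X$. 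Integrating in $\eps$ and chaining with the previous display yields \eqref{F1} with $C(s,p)\le C(q,p)^{1/p}K_{s,p}$, independent of $n$.

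The main obstacle is this appeal to Pisier's $K$-convexity theorem: inside the self-contained heat-semigroup formalism developed in this paper I do not see how to bypass such a deep external input, since the $\sqrt{n}$ bottleneck in Theorem \ref{F1thm} is intrinsic to any argument that separates the scalar Rademacher phase $\sum_j \delta_j\delta_j(t)$ from the vector-valued factor $F$. Everything else in the argument is essentially Fubini together with the already-established Pisier-type inequality for finite cotype.
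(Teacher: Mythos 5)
Your proposal is correct and takes essentially the same route as the paper: apply the finite-cotype Pisier inequality (Theorem \ref{co-type} with $a=1$) to reduce to $\mathbf{E}_{\eps,\delta}\|\sum_j\delta_j F_j\|_X^p$, then observe that $\sum_j\delta_j F_j(\eps_0)$ is the Rademacher projection of $\delta\mapsto F(\eps_0,\delta)$ and invoke Pisier's $K$-convexity theorem for non-trivial type. The only cosmetic difference is that the paper routes through the $L^s(X)$ boundedness of the Rademacher projection plus Kahane--Khintchine (and then passes to general $p$ by lowering the type), whereas you cite the $L^p(X)$ boundedness directly, which is the same ingredient.
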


\begin{proof}
First we prove inequality for $s\le p<\infty$.
Non-trivial type implies finite co-type $s'<\infty$, and this, by Theorem \ref{co-type},  implies  the following:
\begin{equation}
\label{F2}
\bE_\eps\|\sum_{i=1}^n \Delta^{-1}D_i F_i\|^p \le  C(s', p) \bE_{\delta, \eps}\|\sum_{i=1}^n \delta_i F_i\|^p\,, \, 1\le p <\infty\,.
\end{equation}
Now we use Kahane--Khintchine inequality to write
\begin{equation}
\label{KK}
 \bE_\delta\|\sum_{i=1}^n \delta_i F_i\|^p \le  C(s, p)\Big( \bE_\delta\|\sum_{i=1}^n \delta_i F_i\|^s\Big)^{p/s}\,.
\end{equation}
But the expression $\sum_{i=1}^n \delta_i F_i(\eps_0)$ is the Rademacher projection of function $\delta\to F(\eps_0, \delta)$. As $X$ has a non-trivial type $s$, by Pisier's theorem  (see \cite{PAnnals} and Theorem 7.4.28 of \cite {HVNVW2}) it has Rademacher projection bounded independently of $n$ in $L^s(X)$. So 

$$
\Big(\bE_\delta\|\sum_{i=1}^n \delta_i F_i(\eps_0)\|^s \Big)^{p/s}\le C'(s) \Big(\bE_\delta\|F(\eps_0, \delta)\|^s \Big)^{p/s}\le C'(s) \bE_\delta\|F(\eps_0, \delta)\|^p \,.
$$
Now we combine that inequality with  \eqref{KK} for a fixed $\eps=\eps_0$. We are left to integrate in $\eps_0$ and to use \eqref{F2}.

Type $s$ implies type $s_1\in (1, s]$, therefore, in the above reasoning any $p\in (1, \infty)$ can be used.
\end{proof} 

\begin{rem}
In \cite{HN} inequality \eqref{F1} \textup(and thus \eqref{dF}\textup) was proved for $X$ such that $X^*\in UMD$ \textup(in fact a potentially bigger class $UMD^+$ was involved\textup). As non-trivial-type class is self dual, and also strictly wider than $UMD$, so the latter theorem generalizes Theorem 1.4 of \cite{HN}.
\end{rem}
\begin{rem}
It is interesting to notice that the proof of \cite{HN} is based on a formula that means that operators $\Delta^{-1} D_j$ are 
``averages of martingale transforms". As these operators are ``the second order Riesz transforms" on Hamming cube \textup(in fact, $\Delta^{-1} D_j= \Delta^{-1} D_j^2$\textup), it is natural to compare this averaging of martingale transforms to Riesz transforms with the same idea recently widely used in harmonic analysis, see, e. g. \cite{DV}, \cite{PTV}, \cite{NTV1}, \cite{NTV2}. Paper  \cite{DV} is devoted to representing second order Riesz transforms in euclidean space as averaging of martingale transforms, in \cite{PTV} the similar result is proved for the first order Riesz transforms.
\end{rem}

\begin{thm}
\label{F1log}
Let $X$ be an arbitrary Banach space, and $1\le p<\infty$. Then inequality \eqref{F1} \textup(and thus \eqref{dF}\textup) hold with constant $C(p, n)\lesssim n\log n$.
\end{thm}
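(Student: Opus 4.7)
My plan is to mimic the argument of Theorem \ref{F1thm} using formula \eqref{main-f}, but with two modifications needed to accommodate the absence of any co-type assumption on $X$: (i) insert a semigroup factor $P_\tau$ in order to truncate the integrated formula at $s=\tau$, and (ii) estimate the scalar factor in the integrand pointwise, since there is no geometric tool with which to decouple it from the $X$-valued factor.

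Concretely, with $h := \sum_j \Delta^{-1}D_j F_j$ (which is mean-zero), formula \eqref{main-f} combined with $\Delta^{-1}e^{-\tau\Delta}=\int_\tau^\infty e^{-s\Delta}\,ds$ yields, after substituting $F_j(\eps)=\bE_\delta\delta_j F(\eps,\delta)$,
$$
P_\tau h(\eps) \,=\, -\int_\tau^\infty \frac{e^{-s}}{\sqrt{1-e^{-2s}}}\,\bE_\xi\bE_\delta\Big[\Big(\sum_{j=1}^n \delta_j(s)\delta_j\Big)F(\eps\,\xi(s),\delta)\Big]\,ds.
$$
The key feature is that the $j$-sum collapses into a single scalar multiplier $\sum_j\delta_j(s)\delta_j$ of the vector $F(\eps\xi,\delta)$. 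Taking $L^p(X)$ norms by Minkowski's integral inequality and Jensen, then using the distribution invariance $\eps\mapsto\eps\xi$, reduces the problem to bounding $\bE_\xi\bE_\delta\bE_\eps\big|\sum_j\delta_j(s)\delta_j\big|^p\|F(\eps,\delta)\|^p$. I would use the deterministic pointwise estimate
$$
\Big|\sum_j\delta_j(s)\delta_j\Big|\,\le\, n\max_j|\delta_j(s)|\,\le\, n\sqrt{\tfrac{1+e^{-s}}{1-e^{-s}}}.
$$
The product of this bound with the weight $\frac{e^{-s}}{\sqrt{1-e^{-2s}}}$ simplifies to $n\,\frac{e^{-s}}{1-e^{-s}}$, and integration gives
$$
\|P_\tau h\|_{L^p(X)}\,\le\, n\log\tfrac{1}{1-e^{-\tau}}\,\|F\|_{L^p(X)}.
$$

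To finish, I invoke \eqref{compare} in the form $\|h\|_{L^p(X)}\le e^{\tau n}\|P_\tau h\|_{L^p(X)}$, which extends from scalars to $X$-valued functions because its only input is the trivial bound $\|\Delta f\|_{L^p(X)}\le n\|f\|_{L^p(X)}$. Choosing $\tau=1/n$ gives $e^{\tau n}=e$ and $\log\frac{1}{1-e^{-1/n}}=\log n+O(1)$, producing the desired $\|h\|_{L^p(X)}\lesssim n\log n\,\|F\|_{L^p(X)}$. The main obstacle is the pointwise bound on $|\sum_j\delta_j(s)\delta_j|$, which discards all Rademacher cancellation: a Khintchine-type estimate would save a factor $\sqrt n$, but requires decoupling the scalar sum from the $\delta$-dependent vector factor, which is not available for arbitrary $X$, whereas in Theorem \ref{F1thm} the Maurey--Pisier contraction principle makes this decoupling possible via finite co-type. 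Combined with the $e^{\tau n}$ loss inherent to inverting the heat semigroup, this forces the $n\log n$ rate, which is very likely not sharp in full generality.
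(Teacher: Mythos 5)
Your proof is correct and follows the same route as the paper: regularize by $P_\tau$, integrate the representation formula over $s\in[\tau,\infty)$, bound the scalar multiplier $\sum_j\delta_j\delta_j(s)$ by $O(n)$, integrate the resulting logarithmic weight, then invert $P_\tau$ at a cost of $e^{\tau n}$ and take $\tau\asymp 1/n$. The only cosmetic difference is that you estimate the multiplier by the deterministic pointwise bound $|\sum_j\delta_j\delta_j(s)|\le n\max_j|\delta_j(s)|$, whereas the paper first symmetrizes $\xi(s)$ and then applies the Kahane contraction principle; both yield the same $n\log n$ rate.
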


\begin{proof}
Let us reason precisely like at the end of Section \ref{log}, but in $\int_\tau^\infty ...dt$ let us use Theorem \ref{F1thm}. In other words: we do not use \eqref{co-typeF} (we cannot), but instead use
$$
(\bE_{\delta, \eps}\|P_\tau\sum_{j=1}^n \Delta^{-1} D_j F_j\|^p)^{1/p} \lesssim\int_\tau^\infty\frac{e^{-t}}{\sqrt{1-e^{-2t}}}(\bE_\delta\bE_\xi\bE_\eps\|\sum_{j=1}^n \delta_j\delta_j(t) F(\eps\xi, \delta)\|^p)^{1/p} dt=
$$
$$
\int_\tau^\infty\frac{e^{-t}}{1-e^{-2t}}(\bE_\delta\bE_\xi\bE_\eps\|\sum_{j=1}^n \delta_j(\xi_j(t)-\xi_j'(t)) F(\eps, \delta)\|^p )^{1/p}dt,
$$
Now we use Kahane contraction principle:
$$
(\bE_{\delta, \eps}\|P_\tau\sum_{j=1}^n \Delta^{-1} D_j F_j\|^p)^{1/p} \lesssim\log\frac{1+e^{-\tau}}{1-e^{-\tau}} (\bE_\delta [\bE_\eps \|F(\eps, \delta)\|^p )^{1/p}\cdot (|\sum_{j=1}^n \delta_j|^p])^{1/p} \le 
$$
$$
n\log\frac{1+e^{-\tau}}{1-e^{-\tau}} (\bE_{\eps, \delta} \|F(\eps, \delta)\|^p)^{1/p} \,.
$$
Now using again that $\|f\|_p \le e^{\tau  n} \|P_\tau f\|_p$, we get
$$
(\bE_{\delta, \eps}\|\sum_{j=1}^n \Delta^{-1} D_j F_j\|^p)^{1/p} \le n\Big(\min_{0<r<1} r^{-pn} \log \frac{1+r}{1-r} \Big)^{1/p}\lesssim n\log n (\bE_{\eps, \delta} \|F(\eps, \delta)\|^p)^{1/p}\,.
$$
\end{proof}

\begin{rem}
This theorem sounds a bit silly. It should be $\lesssim \sqrt{n}$ for all Banach spaces.  It can be that we missed something simple. On the other hand, it may be a worthwhile exercise to ``marry" the example giving $\sqrt{n}$ in \cite{HN} and Talagrand's example from \cite{T93}, to possibly have a Banach space with behavior of constant in \eqref{F1}, which is worse than $\sqrt{n}$. We did not try so far.
\end{rem}

\section{Quantum random variables, $\bE_\eps \|\sum_i D_i \Delta^{-1/2}  f_i\|_X^p $, $X=L^q$}
\label{qua}

In this section we abandon the idea of  Section \ref{f}. Instead we borrow the idea of Francoise Lust-Piquard \cite{FLP}, \cite{ELP}. The idea is to use non-commutative (quantum) random variables to prove a scalar (and vector) valued Riesz transform estimates. 

The idea below is borrowed from  Francoise Lust-Piquard in \cite{FLP}, \cite{ELP}.

We wish to prove the following.

\begin{thm}
\label{Delta12}
Let $X= L^q$, $2\le q\le p<\infty$.   Then
\begin{equation}
\label{Delta-m}
\bE_\eps \|\sum_i D_i \Delta^{-1/2}  f_i\|_X^p \le C(p,q) \bE_\delta\bE_{ \eps} \|\sum_i\delta_i D_i f_i\|_X^{p}\,.
\end{equation}
In particular,
\begin{equation}
\label{Delta-mf}
\bE_\eps \| \Delta^{1/2}  f\|_X^p \le C(p,q)   \bE_\delta\bE_{ \eps} \|\sum_i\delta_i D_i f\|_X^{p}\,.
\end{equation}
\end{thm}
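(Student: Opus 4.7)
My plan is to follow the noncommutative strategy of Lust-Piquard \cite{FLP} and Efraim-Lust-Piquard \cite{ELP}: replace the classical Rademachers appearing in formula \eqref{main-f} by fermionic generators of a Clifford algebra and exploit a sharper noncommutative Khintchine estimate inside an integral representation.

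First I would combine \eqref{main-f} with the Mellin identity
$$\Delta^{-1/2} \;=\; \frac{1}{\sqrt{\pi}} \int_0^\infty t^{-1/2}\, e^{t\Delta}\, dt$$
(acting on mean-zero functions, as in \eqref{fracPower}) to obtain
$$\sum_i \Delta^{-1/2} D_i f_i(\eps) \;=\; \frac{1}{\sqrt{\pi}} \int_0^\infty \frac{t^{-1/2}e^{-t}}{\sqrt{1-e^{-2t}}}\, \bE_\xi\Big[\sum_i \delta_i(t)\, f_i(\eps\xi(t))\Big]\, dt.$$
Using $D_i^2 = D_i$ we may assume each $f_i$ is odd in $\eps_i$, so that $D_i f_i = f_i$. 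Minkowski's integral inequality, together with the fact that $\eps$ and $\eps\xi(t)$ are equidistributed, reduces the problem to controlling
$$G(t) \;:=\; \Big(\bE_\eps \Big\|\sum_i \delta_i(t)\, D_if_i\Big\|_{L^q}^p\Big)^{1/p}$$
by a function of $t$ which, multiplied by the kernel $t^{-1/2}(1-e^{-2t})^{-1/2}e^{-t}$, is integrable on $(0,\infty)$.

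The scalar/Hölder argument of Section \ref{1p2} would yield only $G(t) \lesssim (1-e^{-2t})^{-1/2}$ times the right-hand side of \eqref{Delta-m}, which makes the $t$-integral logarithmically divergent at $t=0$. Here the quantum step enters: let $c_1, \ldots, c_n$ be generators of the Clifford algebra $\mathrm{Cl}_n$ satisfying $c_i c_j + c_j c_i = 2\delta_{ij} I$, acting on fermion Fock space with normalized trace $\tau$. I would verify a noncommutative version of \eqref{main-f} in which the scalar $\delta_i(t)$ is replaced by a CAR-valued random variable whose $L^q(\tau)$-norm is proportional to $\sqrt{1-e^{-2t}}$, and then apply the Lust-Piquard noncommutative Khintchine inequality
$$\Big\|\sum_i c_i \otimes h_i \Big\|_{L^q(\tau \otimes \nu)} \;\asymp_q\; \Big\|\Big(\sum_i |h_i|^2\Big)^{1/2}\Big\|_{L^q(\nu)},\qquad 2\le q<\infty.$$
This gives $G(t) \lesssim \sqrt{1-e^{-2t}}\,\bigl\|(\sum_i |D_i f_i|^2)^{1/2}\bigr\|_{L^p(L^q)}$; the factor $\sqrt{1-e^{-2t}}$ exactly cancels the singular $(1-e^{-2t})^{-1/2}$ in the Mellin kernel, leaving the integrable $t^{-1/2}e^{-t}$.

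To close, I would invoke classical Khintchine in $L^q(\nu)$ for $q\ge 2$, which identifies $\|(\sum_i |D_if_i|^2)^{1/2}\|_{L^q}$ with $(\bE_\delta \|\sum_i \delta_i D_i f_i\|_{L^q}^q)^{1/q}$, and then Kahane's inequality to upgrade the $L^q(\delta)$-exponent to $L^p(\delta)$, giving \eqref{Delta-m}. The special case \eqref{Delta-mf} follows by specializing to $f_i = D_i f$ and using $\sum_i D_i = -\Delta$ with $D_i^2 = D_i$. The main obstacle is the noncommutative lift of \eqref{main-f}: constructing the correct CAR-valued replacement of $\bE_\xi[\sum_i \delta_i(t)\, f_i(\eps\xi(t))]$ and verifying that its Lust-Piquard bound indeed produces the gain $\sqrt{1-e^{-2t}}$ needed for integrability at $t=0$; without this cancellation, only the cotype range $\max(p,q)<2$ is accessible, which is precisely what Section \ref{1qp2} achieves by purely commutative means.
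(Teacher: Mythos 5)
Your diagnosis of the obstruction is correct: the heat–semigroup representation \eqref{sum1} with $\rho=1/2$ produces the kernel $e^{-t}t^{-1/2}(1-e^{-2t})^{-1/2}\sim 1/t$ near $t=0$, and the commutative contraction–principle bound for $L^q$, $q\le p$, gives only a factor $(1-e^{-2t})^{1/p-1/2}$, which is not enough to make the $t$-integral converge once $p\ge 2$. What you then propose — a ``noncommutative lift'' of formula \eqref{main-f} in which a CAR-valued surrogate of $\delta_j(t)$ has $L^q(\tau)$-norm $\sim\sqrt{1-e^{-2t}}$ — is the entire content of the proof, and you correctly flag at the end that you cannot supply it. This is a genuine gap, and in fact that mechanism does not exist in the form you imagine: a fermionic analogue of the standardized $\delta_j(t)$ would still have unit variance and $L^q(\tau)$-norm of order one, and the Lust--Piquard Khintchine inequality is a square-function estimate with no built-in $t$-dependence, so no extra power of $\sqrt{1-e^{-2t}}$ can appear from that step.

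The paper takes a qualitatively different route, announced by the first sentence of Section \ref{qua} (``we abandon the idea of Section \ref{f}''). Instead of lifting the heat-semigroup/Mellin representation on $(0,\infty)$, it uses the ELP identity \eqref{pj},
\begin{equation*}
D_j\Delta^{-1/2}f=\mathcal{P}\Big(\int_{-\pi/2}^{\pi/2}\frac{\operatorname{sgn}\theta}{t(\theta)}\,e^{\theta\mathcal{D}}P_j\partial_jT_f\,d\theta\Big),\qquad t(\theta)=(-\log\cos\theta)^{1/2},
\end{equation*}
which lives in the Clifford algebra and is integrated over a \emph{compact} interval. Here $\mathcal{R}(\theta)=e^{\theta\mathcal{D}}$ acts by unitary conjugation, so it is an $S_p$-isometry; $\mathcal{P}$ is a contraction on every $S_p$ (it is $\rho\,\mathrm{Diag}\,\rho^*$); and the only singularity is at $\theta=0$, where $\operatorname{sgn}\theta/t(\theta)-\cot(\theta/2)$ is bounded (Lemma \ref{le24P}), so the $\theta$-integral is a Hilbert-transform-type singular integral bounded on $S_p$, $1<p<\infty$, by transference (the four properties \eqref{Htr}--\eqref{equi2}). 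There is no divergence to fight at all. One then applies the Lust--Piquard noncommutative Khintchine inequality to $\sum_jP_j\partial_jT_j$ (after observing that conjugation by $Q_k$ flips the $j$-th sign, so $\|\sum_jP_j\partial_jT_j\|_p=\|\sum_j\eps_jP_j\partial_jT_j\|_p$), obtaining the scalar endpoint \eqref{epi}, and finally passes to $L^p(\ell^2_R)$ via ordinary Khintchine applied coordinate-wise, to $L^p(\ell^p_R)$ by summation, and to $L^p(\ell^q_R)$ by interpolation. In short, the gain you are trying to manufacture from a $t$-decay of the kernel is instead obtained structurally, from the boundedness of a singular integral on a compact abelian group acting on the Clifford algebra; without that change of representation the argument does not close.

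Two smaller remarks. First, your conversion of the square function to a Rademacher average and back (classical Khintchine plus Kahane in $\delta$) is sound but not how the paper organizes the vector step; the paper interpolates directly at the level of the inequality. Second, the specialization $f_i=D_if$ together with $\sum_iD_i=-\Delta$ does give \eqref{Delta-mf}, so that closing step is fine.
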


Inequality \eqref{Delta-m}   is just a little bit more general than results of Francoise Lust-Piquard in \cite{FLP}, \cite{ELP}.

\begin{rem}
\label{Xfinitecotype}
We believe that theorem above holds for all $X$ having finite co-type. But at this moment we cannot prove this.
\end{rem}

\subsection{Non-commutative random variables}
\label{non-c}
Let
$$
Q=\begin{bmatrix}
0 & 1\\
1 & 0
\end{bmatrix},\,\, P = \begin{bmatrix}
0 & i\\
-i & 0
\end{bmatrix},\,\, U= i QP,
$$
They have anti-commutative relationship
\begin{equation}
\label{antiC}
QP=-PQ\,.
\end{equation}
Let $Q_j= I\otimes \dots Q\otimes I\dots\otimes I $, $P_j= I\otimes \dots P\otimes I\dots\otimes I $, on $j$-th place.
These are independent non-commutative random variables in the sense of $\text{tr}$ = sum of diagonal elements divided by $2^n$.

Put $Q_A = \Pi_{i\in A} Q_i$, $P_A = \Pi_{i\in A} P_i$

Now one considers algebra generated by $Q_j, P_j$ (this is algebra of all matrices $\mathcal{M}_{2^n}$). 
We have a projection $\mathcal{P}$ from multi-linear polynomials in $P_j, Q_j$ (notice $P^2=I, Q^2=I$) that kills everything except terms having only $Q's$.

Small (really easy) algebra shows that $\mathcal{P}$ can be written as $\rho Diag \rho^*$, where $\rho$ is a unitary operator, and $Diag$, is an operator on matrices that just kills all matrix elements except the diagonal. This $Diag$ is obviously the contraction on Schatten-von Neumann class $S_p$ for any $p\in [1, \infty]$ (obvious for Hilbert--Schmidt, $p=2$,  class and for bounded operators--so interpolation does that).

Operator $\mathcal{R}(\theta)$ is an automorphism of algebra $\mathcal{M}_{2^n}$ preserving all $S_p$ norms.

$$
\mathcal{R}(\theta)Q_A = \Pi_{j\in A} ( Q_j\cos\theta+ P_j\sin\theta)\,,\,\, \mathcal{R}(\theta)P_A = \Pi_{j\in A} ( P_j\cos\theta - Q_j\sin\theta)\,.
$$
One can easily check that the action of $\mathcal{R}(\theta)$ is $ R(\theta)^* T R(\theta)$ where $R(\theta)$ is a unitary matrix which is $n$-fold tensor product of
$$
\rho_\theta=\begin{bmatrix}
1 & 0\\
0 & e^{i\theta}
\end{bmatrix}
$$

For any $f=\sum_{A\subset [n]} \hat f(A) \eps^A$, the reasoning of \cite{ELP} dictates to assign on non-commutative object, a matrix from $\mathcal{M}_{2^n}$ given by
 $$
 T_f =\sum_{A\subset [n]} \hat f(A) Q_A\,.
 $$
 Such matrices form commutative sub-algebra $M_{2^n}\subset \mathcal{M}_{2^n}$. Operators $\partial_j, D_j$ can be considered on $M_{2^n}$, acting in a canonical way. For example,
 $$
\partial_i Q_A =\begin{cases} Q_{A\setminus i},\,\,\text{if}\,\, i\in A;
\\
0, \, \,\text{if} \,\, i\notin A\,.\end{cases} 
 $$
 
 There is a projection $\mathcal{P}: \mathcal{M}_{2^n} \to M_{2^n}$, which is a contraction in all norms $S_p, 1\le p\le \infty$.

Semigroup $\mathcal{R}(\theta)$ can be written down (see \cite{ELP}) as
\begin{equation}
\label{semi}
\mathcal{R}(\theta) T = e^{\theta \mathcal{D}} (T),\quad \text{where} \,\, \mathcal{D}(T) =\sum_i P_i \partial_i (T), \,  \forall T\in M_{2^n}\,.
\end{equation}

\bigskip

\subsection{Non-commutative formula}
\label{formu}

On Hamming cube we have partial derivatives, acting in the usual way:  $\partial_i \eps^A  = \eps^{A\setminus i}$ if $i\in A$, and 
 $\partial_i \eps^A =0$ otherwise. The discrete derivative $D_i$ is $D_i=\eps_i \partial_i$. Operators $D_i$ are self-adjoint and cube Laplacian is 
 $$
 \Delta = \sum_{i=1}^n D_i\,.
 $$
 
 The inequality that we wish to prove is the following one:
 \begin{equation}
 \label{epi}
 \|\sum_i D_i \Delta^{-1/2} f_i\|_p \le C_p \| \big(\sum |D_i f_i|^2\big)^{1/2}\|_p\,\quad p\ge 2\,.
 \end{equation}
 If all $f$ are the same,  this inequality transforms into 
 \begin{equation}
 \label{flp}
 \|\Delta^{1/2} f\|_p \le C_p \|\nabla f\|_{L^p(\ell^2)},
 \end{equation}
 and the latter was proved in \cite{ELP}.

 \begin{rem}
 Obviously one can write the LHS of \eqref{epi} as $ \|\sum_i D_i \Delta^{-1/2} D_if_i\|_p$ and denote $F_i=D_i f_i$.  Now it becomes tempting to ``generalize" \eqref{epi} to have
  \begin{equation}
 \label{epFi}
 \|\sum_i D_i \Delta^{-1/2} F_i\|_p \le C_p \| \big(\sum |F_i|^2\big)^{1/2}\|_p\,\quad p\ge 2\,.
 \end{equation}
 But this inequality is wrong as if it were correct it would prove just by duality $L^p$-boundedness of  Riesz transform $\vec R:= (D_1 \Delta^{-1/2}, \dots, D_n \Delta^{-1/2})$ on cube for $1<p<2$, which is false, see \cite{ELP}. By the boundedness of the Riesz transform we understand the following inequality for function on discrete cube
 \begin{equation}
 \label{Rp}
 \|\vec R g\|_{L^p (\ell^2)} \le C \|g\|_p\,.
 \end{equation}
 For $1<p<2$ it is proved in \cite{ELP} to be false.
 \end{rem}

 The proof of \eqref{flp}: $\|\Delta^{1/2} f\|_p \le C_p \|\nabla f\|_{L^p(\ell^2)}$  in  \cite{ELP}  uses the formula:
  \begin{equation}
 \label{elpF}
 \Delta^{1/2} f =\mathcal{P}\Big(\int_{-\pi/2}^{\pi/2} \frac{\text{sgn}\,(\theta)}{t(\theta)} e^{\theta \mathcal{D}} \mathcal{D}(T_f)\Big)\,.
 \end{equation}
 Here $t(\theta)$ is given by formula
 $$
 t(\theta)=(-\log\cos\theta)^{1/2}\,.
 $$
 
 This formula seems to be not well-suited to prove \eqref{epi}. 
 
  We need another formula.  First of all
 here is Pisier's lemma Pisier, \cite{P}:
\begin{lem}
\label{le24P}
The odd function $ \frac{\text{sgn}\,(\theta)}{t(\theta)} =\frac{\text{sgn}\,(\theta)}{(-\log\cos\theta)^{1/2}}$ on $[-\pi/2, \pi/2]$  is such that a) $\phi(\theta)- \cot (\theta/2)$ is bounded and 
$$
b) \,\, \forall m\ge 0, \,\, \int_{-\pi/2}^{\pi/2} \cos^m\theta \sin\theta \, \frac{\text{sgn}\,(\theta)}{t(\theta)} \,d\theta = c \frac1{\sqrt{m+1}}\,.
$$
\end{lem}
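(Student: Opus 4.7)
The plan is to prove parts (a) and (b) separately, since they rest on quite different techniques: (b) reduces to a direct computation with a Gamma integral, while (a) is a pointwise asymptotic analysis near the only singular point $\theta=0$.

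For part (b), since $\phi$ is odd, $\sin\theta$ is odd, and $\cos^m\theta$ is even, the integrand is even, so
\begin{equation*}
\int_{-\pi/2}^{\pi/2}\cos^m\theta\,\sin\theta\,\phi(\theta)\,d\theta \;=\; 2\int_0^{\pi/2}\frac{\cos^m\theta\,\sin\theta}{(-\log\cos\theta)^{1/2}}\,d\theta.
\end{equation*}
The substitution $u=\cos\theta$, $du=-\sin\theta\,d\theta$ reduces this to $2\int_0^1 u^m(-\log u)^{-1/2}\,du$, and a further substitution $v=-\log u$, $u=e^{-v}$ produces the Gamma integral
\begin{equation*}
2\int_0^\infty v^{-1/2}e^{-(m+1)v}\,dv \;=\; \frac{2\,\Gamma(1/2)}{\sqrt{m+1}} \;=\; \frac{2\sqrt{\pi}}{\sqrt{m+1}},
\end{equation*}
identifying the constant as $c=2\sqrt{\pi}$. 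This step is essentially a Gamma identity in disguise.

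For part (a), I would first observe that $\phi$ is smooth on each of the open intervals $(0,\pi/2)$ and $(-\pi/2,0)$, and $\phi(\theta)\to 0$ as $\theta\to\pm\pi/2$ because $-\log\cos\theta\to\infty$; similarly $\cot(\theta/2)$ is smooth and finite on $(0,\pi/2]$ (extended oddly to $[-\pi/2,0)$). Hence the difference $\phi(\theta)-\cot(\theta/2)$ is automatically bounded on any compact set separated from $0$, and all the work is localized in a neighborhood of $\theta=0$, where both terms have a simple pole. The plan is to Taylor expand $-\log\cos\theta=\frac{\theta^2}{2}+\frac{\theta^4}{12}+O(\theta^6)$, invert the square root to extract the Laurent expansion of $\phi$ at the origin, and compare it with the standard expansion $\cot(\theta/2)=\frac{2}{\theta}-\frac{\theta}{6}+O(\theta^3)$ so that the $1/\theta$ contributions cancel and the remainder extends continuously through the origin. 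The main obstacle is purely the bookkeeping in part (a): one has to combine the Taylor series of $\cos$, of $\log$, and of the reciprocal square root carefully enough to confirm that the residue of $\phi$ at $0$ matches that of $\cot(\theta/2)$ (with the normalization implicit in the statement), so that the poles exactly cancel and the difference is continuous, hence bounded, on the compact interval $[-\pi/2,\pi/2]$.
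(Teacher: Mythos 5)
The paper itself does not actually prove this lemma; it merely states it and attributes it to Pisier's lecture notes \cite{P}. So the review is of your argument on its own terms.

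Your treatment of part (b) is correct and complete: the chain of substitutions $u=\cos\theta$, $v=-\log u$, and then $w=(m+1)v$ cleanly reduces the integral to $\Gamma(1/2)$, giving $c=2\sqrt{\pi}$. No issues there.

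Part (a), however, contains a genuine gap, and in fact the claimed residue cancellation fails with the normalization written in the lemma. Your own expansion $-\log\cos\theta = \tfrac{\theta^2}{2}+\tfrac{\theta^4}{12}+O(\theta^6)$ gives, for $\theta>0$,
\begin{equation*}
(-\log\cos\theta)^{1/2}=\frac{\theta}{\sqrt 2}\Bigl(1+\frac{\theta^2}{12}+O(\theta^4)\Bigr),
\qquad\text{so}\qquad
\phi(\theta)=\frac{\sqrt 2}{\theta}-\frac{\sqrt 2\,\theta}{12}+O(\theta^3),
\end{equation*}
while $\cot(\theta/2)=\tfrac{2}{\theta}-\tfrac{\theta}{6}+O(\theta^3)$. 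The residues are $\sqrt 2$ and $2$, which do not agree, so
\begin{equation*}
\phi(\theta)-\cot(\theta/2)=\frac{\sqrt 2-2}{\theta}+O(\theta),
\end{equation*}
which is unbounded near $\theta=0$. You hedged with ``with the normalization implicit in the statement'' and asserted the poles ``exactly cancel,'' but you did not actually carry the expansion to the point of extracting the residue — had you done so, you would have seen the mismatch. The statement as printed is off by a constant: boundedness holds for $\phi(\theta)-\tfrac{1}{\sqrt 2}\cot(\theta/2)$, or equivalently if one replaces $t(\theta)$ by $\bigl(-\tfrac12\log\cos\theta\bigr)^{1/2}$ (this rescaling only changes the constant $c$ in part (b), not its form). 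So the computation you outlined is the right strategy, but executing it reveals a normalization error rather than confirming cancellation; a correct proof should either fix the constant in front of $\cot(\theta/2)$ or exhibit the explicit rescaling of $t(\theta)$ and verify that part (b) survives it.
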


 The following claim is closely related to Lemma 5.2 of \cite{ELP}. Namely, notice that
 \begin{equation}
 \label{qa}
 D_j \Delta^{-1/2} Q_A = \mathcal{P}\Big(\int_{-\pi/2}^{\pi/2} \frac{\text{sgn}\,(\theta)}{t(\theta)} e^{\theta \mathcal{D}} P_j \partial_j Q_A\Big)\,.
 \end{equation}
 In fact, if $j\notin A$ both sides are zero. Now let $j\in A$. Then 
$$
e^{\theta \mathcal{D}} P_j \partial_j Q_A \!\!  =\!\!e^{\theta \mathcal{D}} P_j Q_{A\setminus j} = \prod_{s\in A, s<j} \!\! (\cos \theta Q_s +\sin\theta P_s)
(\sin\theta \,Q_j -\cos\theta P_j) \!\!  \prod_{s\in A, s>j} \!\! (\cos \theta Q_s +\sin\theta P_s)
$$
Thus
$$
\mathcal{P} \big(e^{\theta \mathcal{D}} P_j \partial_j Q_A\big) = \cos\theta^{|A|-1} \sin\theta \, Q_A
$$
So after integration in $\theta$ (see  Lemma \ref{le24P}) we get
$$
 \mathcal{P}\Big(\int_{-\pi/2}^{\pi/2} \frac{\text{sgn}\,(\theta)}{t(\theta)} e^{\theta \mathcal{D}} P_j \partial_j Q_A\Big) = \frac1{|A|^{1/2}} Q_A =  D_j \Delta^{-1/2} Q_A,
 $$
 as $D_j$ does not change $Q_A$ if $j\in A$.

Formula \eqref{qa} gives us (for any $T=T_f \in M_{2^n}$)
\begin{equation}
\label{pj}
D_j\Delta^{-1/2} (\eps_j \partial_j f)=D_j \Delta^{-1/2} f = \mathcal{P}\Big(\int_{-\pi/2}^{\pi/2} \frac{\text{sgn}\,(\theta)}{t(\theta)} e^{\theta \mathcal{D}} P_j \partial_j T \Big)\,.
 \end{equation}
 
 \begin{rem}
 It is very tempting to ``generalize" this formula and to  write
 $$
D_j\Delta^{-1/2} (\eps_j F)= \mathcal{P}\Big(\int_{-\pi/2}^{\pi/2} \frac{\text{sgn}\,(\theta)}{t(\theta)} e^{\theta \mathcal{D}} P_j T_F\Big)\,.
$$
This is wrong. The difference is that $\eps_j \partial_j f$ is an odd function in $\eps_j$ variable, while $\eps_j F$ does not have this property  for generic $F$.
\end{rem}
\begin{rem}
Adding \eqref{pj} in $j$ we, of course, get \eqref{elpF}.
\end{rem}

\subsection{Commutative end-point inequality by non-commutative formula}
\label{commu}

Having proved formula \eqref{pj} we can finish the proof of  end-point inequality \eqref{epi}.
Let $f_1, \dots, f_n$, we consider $T_1= T_{f_1}, \dots, T_n =T_{f_n}$. Then
$$
\sum_jD_j \Delta^{-1/2} f_j = \mathcal{P}\Big(\int_{-\pi/2}^{\pi/2} \frac{\text{sgn}\,(\theta)}{t(\theta)} e^{\theta \mathcal{D}} \sum_jP_j \partial_j T_j \Big)\,.
$$
Now using the facts that $\mathcal{P}$ is a contraction in $S_p$, that our singular integral  is bounded in $S_p$, $1<p<\infty$, and that semigroup $e^{\theta\mathcal{D}}$ is bounded in $S_p$ we get that
$$
\|\sum_jD_j \Delta^{-1/2} f_j \|_p \le C\,p \,\|\sum_jP_j \partial_j T_j\|_p\,.
$$
Because of anti-commutative relation \eqref{antiC} we notice that 
$$
Q_k \big(\sum_jP_j \partial_j T_j \big) Q_k= \sum_j\eps_jP_j \partial_j T_j,
$$
where all $\eps_i=1$ except for $\eps_k=-1$. Thus, doing this repeatedly, we can see that 
$$
\|\sum_jP_j \partial_j T_j\|_p = \|\sum_j\eps_jP_j \partial_j T_j\|_p
$$
 with arbitrary signs.

\medskip

Using non-commutative Khintchine inequality of Lust-Piquard we get for $p\ge 2$:
$$
\|\sum_jD_j \Delta^{-1/2} f_j \|_p \!\! \le\!\!  C\,p^{3/2} \max \big( \| (\sum_i (\partial_iT_i)^* (\partial_iT_i)\big)^{1/2}\|_p,   \| \big(\sum_i P_i (\partial_iT_i)(\partial_iT_i)^* P_i\big)^{1/2}\|_p\big)\,.
$$
The second term is equal to the first one, because $\partial_i T_i$ does not have $Q_i$ in it, so $P_i$ can be carried  through to make $P_i^2=I$.

So we get 
$$
\|\sum_jD_j \Delta^{-1/2} f_j \|_p \le C\,p^{3/2} \, \| \big(\sum_i |\partial_i f_i|^2\big)^{1/2}\|_p = C\,p^{3/2} \, \| \big(\sum_i |D_i f_i|^2\big)^{1/2}\|_p \,.
$$
This is inequality \eqref{epi} with constant $C_p\asymp p^{3/2}$. 

\begin{rem}
Actually constant's growth  should be $p$ here.  
\end{rem}

\subsection{Transference argument or the Hilbert transform trick}

We need one more step called ``transference argument" or ``the Hilbert transform trick". We need ``to notice" that for a vector-valued function $F(\theta)$ on $[-\pi, \pi]$ with values in functions on  Hamming cube ($F(\theta)=\mathcal{P}(\mathcal{R}(\theta)\sum_{k=1}^n \eps_k \partial_{\delta_k} f )$) the following  four things happen
\begin{equation}
\label{Htr}
a)\,\,\Big\|\int_{-\pi}^\pi  \frac{\text{sgn}\,\theta}{t(\theta)} F(\theta-\psi) \, d\theta\Big\|_{L^p([-\pi, \pi], L^p(\Om_n))} \le C_p \|F\|_{L^p([-\pi, \pi], L^p(\Om_n))}\,,
\end{equation}

\begin{equation}
\label{proj}
b)\,\, \|\mathcal{P}\|_{ L^p(\Om_n)} \le 1 \,,
 \end{equation}
 
 \begin{equation}
\label{equi1}
c)\,\,  \|\mathcal{R}(\theta) G\|_{L^p(\Om_n)}= \|G\|_{L^p(\Om_n)}\,,
 \end{equation}

 \begin{equation}
 \label{equi2}
d)\,\, \Big\|\int_{-\pi}^\pi  \frac{\text{sgn}\,\theta}{t(\theta)} \mathcal{R}(\theta) G \, d\theta\Big\|_{L^p( \Om_n))}= \Big\|\int_{-\pi}^\pi  \frac{\text{sgn}\,\theta}{t(\theta)} \mathcal{R}(\theta-\psi)G \, d\theta\Big\|_{L^p([-\pi, \pi]\times \Om_n)} 
 \end{equation}

 \subsection{Why all this is true?}
 
 $\mathcal{R}(\theta)$ can be written down as 
$$
\mathcal{R}(\theta)T = A_\theta^* T A_\theta,
$$
where $A_\theta$ is a unitary matrix:
$$
A_\theta = R_\theta\otimes\dots \otimes R_\theta,
$$
where
$$
R_\theta := \begin{bmatrix}
1 & 0\\
0 & e^{i\theta}
\end{bmatrix}
$$

\subsection{Why in non-commutative case  the projection $\mathcal{P}$ is the contraction in Schatten--von Neumann norms?}

Consider  matrix 
$$
r:=\frac1{\sqrt{2}}\begin{bmatrix}
1 & 1\\
-1 & 1
\end{bmatrix}
$$ and unitary matrix
$\rho= r\otimes \dots \otimes r$.
Let $\nu$ acts on $\mathcal{M}_{2^n}$  by $\nu(T)=  \rho T\rho^*$.
Then it is easy to calculate that
$$
\nu^{-1} (U_j) = Q_j,\,\, \nu^{-1} (Q_j) = -  U_j = -i P_jQ_j,\,\, \nu^{-1}(P_j) = P_j\,.
$$
Now $\mathcal{P}\nu^{-1}$ kills $Q_j$ and $P_j$ and maps $U_j$ to $Q_j$.
Hence, $\nu \mathcal{P} \nu^{-1}$ maps $U_j$ to $U_j$ and kills others. This shows that $\nu \mathcal{P} \nu^{-1}$ is also a projection (that it is  some projection was clear from the start): $\nu \mathcal{P} \nu^{-1}$ is projection $Diag$ on diagonal. As such, it is a contraction. So $\mathcal{P}$ is a contraction in all $\gamma_p$ norms, $1\le p\le \infty$.
We are done.

\bigskip

\subsection{From scalar valued functions to $L^p(\ell^2)$, $L^p(\ell^p)$ valued functions}
\label{vector}

Recall that on matrices we have Schatten-von Neumann norms:
$$
\|A\|_{\sigma_p} := \Big(\text{Tr}\Big[ (A^*A)^{p/2}\Big]\Big)^{1/p}, \quad 1\le p\le \infty\,.
$$

Let now vector valued $F$ replaces scalar valued $f:\{-1,1\}^n\to  \bR$, namely, $F=\{f^r\}_{r=1}^R$, and each 
$f^r:\{-1,1\}^n\to  \bR$. We consider the mixed metric
$$
\|F\|_{L^p(X)} = \Big\| | \{f^r\}_{r=1}^R|_X\Big\|_{L^p(\{-1, 1\}^n}\,,
$$
in what follows $X= \ell^2_R$ or $X=\ell^p_R$.

Above we have the correspondence (homomorphism) from functions $f:\{-1,1\}^n\to  \bR$ to the class  $2^n\times 2^n$ matrices $M_{2^n}$  given by
$$
f\to T_f
$$
described above. 

Now  let $X=\ell^2_R$ and the correspondence between vector functions $F\in L^p(X)$ and matrices will be
given
$$
\vec F =\{f^r\}_{r=1}^R\to C_F:=\begin{bmatrix} T_{f^1}\\ \cdot\\ \cdot\\ \cdot\\T_{f^R}\end{bmatrix}\,.
$$

Notice that  before we had the isometry
\begin{equation}
\label{sc-p}
\|f\|_{L^p} = \|T_f\|_{\sigma_p},
\end{equation}
Now we also have the same isometry:
\begin{equation}
\label{v-p2}
\|\vec F\|_{L^p(\ell^2_R)} = \|C_F\|_{\sigma_p}\,.
\end{equation}
In fact,
$$
\|C_F\|_{\sigma_p} =\Big( \text{Tr}\Big[ (C_F^*C_F)^{p/2}\Big]\Big)^{1/p} =  \Big( \text{Tr}\Big[ (\sum_{r=1}^R T_{f^r}^*T_{f^r})^{p/2}\Big]\Big)^{1/p}=
$$
$$
\|(\sum_{r=1}^R |f^r|^2)^{1/2}\|_{L^p}=  \|\vec F\|_{L^p(\ell^2_R)},
$$
where the penultimate equation follows from \eqref{sc-p} and from the fact that $f\to T_f$ is algebras homomorphism.

\medskip

One more simple observation follows. Now let us consider $X=\ell^p_R$. The correspondence between vector functions $\vec F\in L^p(X)$ and matrices will be
given
$$
\vec F =\{f^r\}_{r=1}^R\to D_F:=\begin{bmatrix} T_{f^1}, &0 \dots &0 \\ \cdot  &T_{f^2} \dots &0\\\cdot\\ 0\dots  &0 \dots &T_{f^R}\end{bmatrix}\,.
$$
Then, similarly to the above
\begin{equation}
\label{v-pp}
\|\vec F\|_{L^p(\ell^p_R)} = \|D_F\|_{\sigma_p}\,.
\end{equation}
Now, in principle,  we would be able to repeat verbatim the proof for scalar functions. We may hope to use non commutative Khintchine theorem for $\sum_{i=1}^n\eps_i C_{F_i}$ and $\sum_{i=1}^n\eps_i D_{F_i}$ along with \eqref{v-p2}, \eqref{v-pp}. However, the non commutative Khintchine theorem for $\sum_{i=1}^n\eps_i C_{F_i}$ will meet a difficulty.

\bigskip

Let $\Big|\cdot\Big|_2$ denote the norm in $\ell^2_R$.
We choose a more direct way to prove
\begin{equation}
\label{ell-2p}
\bE_\eps \Big| \sum_{i=1}^n \Delta^{-1/2} D_i \vec F_i\Big|_2^p \le C(p) \bE_{\delta, \eps} \Big| \sum_{i=1}^n \delta_i D_i \vec F_i\Big|_2^p\,.
\end{equation}
This follows immediately from Khintchine inequality and the scalar version that we just proved. In fact, as before $\vec F_i= \{f_i^r\}_{r=1}^R$. Let $\{\eps_r'\}_{i=1}^R$ be standard Rademacher variables independent of $\{\eps_i\}_{i=1}^n$ and $\{\delta_i\}_{i=1}^n$.
For each fixed  $\{\eps_r'\}_{i=1}^R$ we just write the scalar inequality
$$
\bE_\eps | \sum_{i=1}^n \Delta^{-1/2} D_i (\sum_{r=1}^R \eps_r' f_i^r)|^p \le C(p) \bE_{\delta, \eps} |  \sum_{i=1}^n \delta_i D_i (\sum_{r=1}^R \eps_r' f_i^r)|^p \,.
$$
Change the order of summation and apply $\bE_{\eps'}$:
$$
\bE_\eps \bE_{\eps'}| \sum_{r=1}^R \eps_r'  \sum_{i=1}^n \Delta^{-1/2} D_i f_i^r|^p \le C(p) \bE_{\delta, \eps}  \bE_{\eps'}|\sum_{r=1}^R \eps_r' \sum_{i=1}^n \delta_i D_i f_i^r|^p \,.
$$
Khintchine inequality applied to both parts gives
$$
\bE_\eps \Big( \sum_{r=1}^R  | \sum_{i=1}^n \Delta^{-1/2} D_i f_i^r|^2\Big)^{p/2} \le C'(p)\bE_{\delta, \eps}  \Big(\sum_{r=1}^R | \sum_{i=1}^n \delta_i D_i f_i^r|^2\Big)^{p/2} \,.
$$
This is \eqref{ell-2p}.

\bigskip

Now let $\Big|\cdot\Big|_p$ denote the norm in $\ell^p_R$.
The next inequality does not need to be proved, it just follows from  the summation of $R$ scalar inequalities.
\begin{equation}
\label{ell-pp}
\bE_\eps \Big| \sum_{i=1}^n \Delta^{-1/2} D_i \vec F_i\Big|_p^p \le C(p) \bE_{\delta, \eps} \Big| \sum_{i=1}^n \delta_i D_i \vec F_i\Big|_p^p\,.
\end{equation}

Now one can interpolate between \eqref{ell-2p} with norm $L^p(\ell^2_R)$ and \eqref{ell-pp} norm $L^p(\ell^p_R)$ and get the inequality
\begin{equation}
\label{ell-qp}
\bE_\eps \Big| \sum_{i=1}^n \Delta^{-1/2} D_i \vec F_i\Big|_q^p \le C(p) \bE_{\delta, \eps} \Big| \sum_{i=1}^n \delta_i D_i \vec F_i\Big|_q^p, \quad, 2\le q\le p\,.
\end{equation}
This means that we finally proved Theorem \ref{Delta12}.

\section{Counterexamples}
\label{cex}

\subsection{Counterexample 1: Talagrand's counterexample}
\label{cexT}

The inequality 
$$
||F-\bE F||_{L^p(X)}^p\lesssim  \bE_\delta||\sum_i \delta_i D_iF ||_{L^p(X)}^p
$$
 does not hold for
general Banach space valued function $F$ (without finite co-type requirement) when $p<\infty$ with constant independent of $n$.

To construct vector-valued counterexample one starts with the scalar case.  In the scalar case, we know that certain
inequality, namely \eqref{failTsc}, fails for bounded scalar valued functions, with counterexample given by this concrete function:
$$
f(\eps)=\log^+\frac{(\sum_i 
(1-\eps_i)/2)}{\sqrt{n}}=\log^+\frac{\text{dist}(\eps, 1)}{\sqrt{n}}\,.
$$

The relevant scalar inequality that fails is
\begin{equation}
\label{failTsc}
    ||f-\bE f||_{L^\infty(\eps)} \lesssim \Big(\bE_\delta \|\sum_i \delta_i D_i f\|_{L^\infty(\eps)}^p\Big)^{1/p}=
    \Big\| ||\sum_i \delta_i D_if||_{L^\infty(\eps)} \Big\|_{L^p(\delta)}
\end{equation}
We will see how from failing \eqref{failTsc} to determine the Banach space $X$ and a $X$ valued function $f$ on Hamming cube in such a way that the following will fail too:
\begin{equation}
\label{failTtoo}
   \Big(\bE_\eps ||f-\bE f||_X^p\Big)^{1/p} \lesssim \Big(\bE_{\delta, \eps} \|\sum_i \delta_i D_i f\|_{X}^p\Big)^{1/p}=
    \Big\| ||\sum_i \delta_i D_if||_{X}\Big\|_{L^p(\delta, \eps)}
\end{equation}
For scaler $f$ above 
$$
A_1=\bE_x \log^+\frac{\text{dist}(x, 1)}{\sqrt{n}} \ge c \log n,
$$
because the distance is at least $n/3$ for a fixed portion of $x$'s.
On the other hand,  if $\text{dist}(\eps, 1)<\sqrt{n}$, then $f(\eps)=0$. The portion of such $\eps$ is exponetially small but positive. Therefore,
\begin{equation}
\label{lhsTsc}
\|f(\eps) - \bE f\|_{L^\infty(\eps)} \ge c\log n\,.
\end{equation}

Now let us look at the estimate of $(\bE_{\eps,\delta}\|\sum_i \delta D_i f\|_X^p)^{1/p}$. According to (6.2)  on page 313 of \cite{T93}
\begin{equation}
\label{ab}
\forall \delta, \eps \quad |\sum_i \delta_i D_{i} f(\eps)| \le \frac{4}{\sqrt{n}} |\sum_i \delta_i | + 4\,.
\end{equation}
Hence,
\begin{equation}
\label{rhsTsc}
\Big \| ||\sum_i \delta_i D_if||_{L^\infty(\eps)} \Big\|_{L^p(\delta)}^p  \le \frac{C_1}{n^{p/2}}\bE_\delta  |\sum_i \delta_i | ^p +C_2\le C_3\,.
 \end{equation}
 
Comparing \eqref{lhsTsc} and \eqref{rhsTsc}, we see that \eqref{failTsc} really fails.

Now we are going to show Talagrand's example by considering $L_\eps^p(L_\eta^\infty(\{-1, 1\}^n)$ and vector valued function (or just a function of two variables, $\eps, \eta$ with mixed norm. As the recipe says in the vector  valued case with $p<\infty,$ we choose 
$X=L^\infty(\{-1,1\}^n)$ and consider $(\eps,\eta) \to f(\eps\eta)$. By symmetry, we can 
exchange the roles of $\eps$ and $\eta$ in the inequality. This way we embed 
the scalar example for $p=\infty$ into the vector case for $p<\infty$, namely into $L^p(\ell^\infty)$.

So, consider 
$$
F(\eps;\eta):=f(\eps\eta)=\log^+\frac{(\sum_i 
(1-\eps_i\eta_i)/2)}{\sqrt{n}}\,.
$$
$$
A_\eta=\bE_x \log^+\frac{\text{dist}(x, \eta)}{\sqrt{n}} \ge c \log n,
$$
By group invariance 
$$
\| F(\eps; \eta) -\bE_\eps F(\eps; \eta)\|_{L^\infty(\eta)} =\| F(\eps; \eta) -A_\eta\|_{L^\infty(\eta)}   \ge c\log n\,.
$$
In fact, the left hand side is independent of $\eps$.
Hence, we have a full analog of \eqref{lhsTsc}:
\begin{equation}
\label{lhsTX}
\Big\| || F - A_\eta||_{L^\infty_\eta}\Big\|_{L^p(\eps)} \ge c\log n\,.
\end{equation}

On the other hand, the same group invariance shows us that \eqref{ab} implies
\begin{equation}
\label{abX}
\forall \delta, \eps, \eta \quad |\sum_i \delta_{i}(D_{\eps_i} F)(\eps; \eta)| \le \frac{4}{\sqrt{n}} |\sum_i \delta_i | + 4\,.
\end{equation}
From here we get the analog of \eqref{rhsTsc}:
\begin{equation}
\label{rhsTX}
\Big \| ||\sum_i \delta_i D_if||_{L^\infty(\eta)} \Big\|_{L^p(\delta,\eps)}^p  \le \frac{C_1}{n^{p/2}}\bE_\delta  |\sum_i \delta_i | ^p +C_2\le C_3\,.
 \end{equation}

Comparing \eqref{lhsTX} and \eqref{rhsTX} we  come to Talagrand's counterexample:
$$
||F-\bE F||_{L^p(X)}^p\lesssim  \bE_\delta||\sum_i \delta_i D_iF ||_{L^p(X)}^p
$$
 does not hold in $L^p(X)=L^p(\{-1, 1\}^n, L^\infty(\{-1,1\}^n))$ with constant independent of $n$ and even with constant growing slower than $c\log n$.

\subsection{Counterexample 2: Banach space $X$ can be very good but there is no Riesz transform estimate from above for $X$-valued functions on discrete cube}
\label{cexRieszXfrAbove}

We show below that the Riesz transform inequality 
\begin{equation}
\label{RX1}
||\sum_i \delta_i D_i \Delta^{-1/2} f||_p \lesssim ||f||_p
\end{equation}
does not hold in $L^p(X)$ for  Banach spaces $X$ with  co-type $2$ when $p\ge 2$.

To construct vector-valued counterexample one starts with the scalar case.  The idea follows Talagrand's counterexample in the previous section, the counterexample below was given by Ramon Van Handel.  In the scalar case with $\alpha<1$, we know 
the inequality fails for $p<1/\al$ 
$$
||\sum_i \delta_i D_i \Delta^{-\al} f||_{L^p(X)} \lesssim ||f||_{L^p(X)}
$$
with counterexample given by a special scalar function $f$. See Lemma 5.5 in \cite{ELP}. So, in 
the vector case with $p>2$, we choose $X=\ell^{p'}$ with $p'<2$, and consider 
the function $(\eps,\eta) \to f(\eps\eta)$. By symmetry, we can exchange the 
roles of $\eps$ and $\eta$ in the inequality. This way we embedded the scalar 
counterexample for $p'<2$ into the vector case for $p>2$.

\begin{rem}
\label{pb2}
The striking thing here is not that the Riesz transform inequality is false even if coefficients are in a very good Banach space $X$, it will be $L^s, 1<s<2$, so in particular a $UMD$ space, such things happen, see Lamberton's example on page 283 of \cite{FLP}, but more striking is that this happens for $X$-valued $L^p$ with $p\ge2$. For discrete cube the case of $L^p(X)$ turns out to be mysterious even for $p\ge 2$.
\end{rem}

\begin{rem}
\label{webelieve}
We believe that this effect of ``good spaces/bad estimate of Riesz transform on discrete cube'' appears because \eqref{RX1} is a vector valued estimate of Riesz transform from above.
For the estimate of a vector valued  Riesz transform from below, and even for a more general estimate
\begin{equation}
\label{RXbelow1}
\|\sum_i D_i \Delta^{-1/2} f_i\|_{L^p(X)} \lesssim \Big(\bE_\delta\|\sum_i \delta_i D_i f_i\|_{L^p(X)}^p\Big)^{1/p}
\end{equation}
we believe that it holds for all $1<p<\infty$ and all $X$ of finite co-type. As the reader saw in Section \ref{qua} we managed to prove this only for certain regimes $(p, q)$ for $X=L^q, q<\infty$.
\end{rem}

Along with counterexample to \eqref{RX1} written below we could have written a more general one, namely, that for $\al<1$
\begin{equation}
\label{RX2}
||\sum_i \delta_i D_i \Delta^{-\al} f||_{L^p(X)}  \lesssim ||f||_{L^p(X)} 
\end{equation}
does not hold in $L^p(X)$ if $X$ is chosen to be $L^s, 1< s<1/\al$ and $p\ge s$.

\bigskip

Now the details follow. Lamberton's function that Lust-Piquard  \cite{FLP} uses  as a counterexample in the scalar case for 
$s<2$ is the function $f:\{-1,1\}^n\to \bR$ defined by
$$
   f(\epsilon) := 2^{-n} \prod_{i=1}^n (1+\epsilon_i) = 1_{\epsilon={\bf 1}}.
$$
But by equivariance of everything (derivatives, Laplacian, etc.) under 
action of the group $\{-1,1\}^n$, this counterexample works verbatim for the 
function $f_\eta:\{-1,1\}^n\to \bR$ for any $\eta\in\{-1,1\}^n$ defined by
$$
   f_\eta(\epsilon) := f(\epsilon\eta) = 1_{\epsilon=\eta}.
$$
We now define the vector valued function $g:\{-1,1\}^n\to L^s(\{-1,1\}^n)$,  $s<2$, by setting
$$
   g(\epsilon) = f_.(\epsilon)
$$
(that is, for every $\epsilon$, we think of the map $\eta \to  f_\eta(\epsilon)$ 
as an element of $X:=L^s(\{-1,1\}^n)$. We claim that
\begin{equation}
\label{cannot}
  \bE_\delta || \sum_i \delta_i D_i g(\epsilon)||_{L^p(X)}^p \lesssim ||\Delta^{1/2} g||_{L^p(X)}^p
\end{equation}
cannot hold with dimension-free constant for the above $X$ and $g$, for any 
$p\ge 2$ and $s<2$, providing us with the counterexample we seek with a strikingly good Banach space $X:=L^s(\{-1,1\}^n)$, it is $UMD$ for example.

To see it, we basically just exchange the roles of $\epsilon$ and $\eta$ in 
the above inequalities. After all, clearly,
$
D_{\eps_i} \big[f(\eps\eta)\big] = (D_{i}f)(\eps\eta)
$, and so
$$
   ||\sum_i \delta_i D_i g(\epsilon)||_X
   = [E_\eta \big|\sum_i \delta_i D_i f_\eta(\epsilon)\big|^s]^{1/s}
   = [E_\eta\big|\sum_i \delta_i D_i f_\epsilon(\eta)\big|^s]^{1/s}  =
$$
$$
[E_\eta\big|\sum_i \delta_i D_i f(\eta)\big|^s]^{1/s}, 
$$
where we used the symmetry of $f$ in $\eta,\epsilon$. 
Using this we see that $E_\eta \big|\sum_i \delta_i D_i f_\epsilon(\eta)\big|^s$, $E_\eta\big|Df_\epsilon(\eta)\big|^s$ are independent of $\eps$ and using Khintchine inequality, we find that \eqref{cannot} would imply (as $p/s>1$) the following:
$$
  [E_\eta\big|Df(\eta)\big|^s]^{p/s}
 \asymp \Big(\bE_\eta \bE_\delta \big|\sum_i \delta_i D_i f(\eta)\big|^s\Big)^{p/s}= \Big(\bE_\delta \bE_\eta \big|\sum_i \delta_i D_i f(\eta)\big|^s\Big)^{p/s} \le
$$
$$
 \bE_\delta\Big([E_\eta\big|\sum_i \delta_i D_i f(\eta)\big|^s]^{1/s}\Big)^p=\bE_\delta\bE_\epsilon\Big([E_\eta\big|\sum_i \delta_i D_i f_\epsilon(\eta)\big|^s]^{1/s}\Big)^p=
$$
$$
    \bE_\delta || \sum_i \delta_i D_i g(\epsilon)||_{L^p(X))}^p
   \lesssim ||\Delta^{1/2} g||_{L^p(X)}^p
   = \big(\bE_\eps [E_\eta\big|\Delta^{1/2} f_\epsilon(\eta)\big|^s]^{1/s}\big)^p=
$$
$$
[E_\eta\big|\Delta^{1/2} f(\eta)\big|^s]^{p/s}\,.
$$
This is because the expression $E_\eta\big|(\Delta^{1/2} f_\epsilon(\eta)\big|^s$ is also independent of $\eps$.
But  Lamberton's (unpublished)  scalar counterexample that can be found on p. 283 of  Lust-Piquard's \cite{FLP} with $s\in (1, 2)$ shows  that
$$
   [E_\eta|Df(\eta)|^s]^{1/s}
   \lesssim [E_\eta(\Delta^{1/2} f(\eta))^s]^{1/s}
$$
cannot hold with dimension-free constant. Thus we have the counterexample we want.

\bigskip

\subsection{Riesz transforms from below. No counterexample}
\label{nocex}

When viewed this way,  our counterexample in subsection \ref{cexRieszXfrAbove} and Talagrand's  example in subsection \ref{cexT} really rely 
on the same principle: find a scalar inequality that fails and then embed 
it into the vector inequality by a proper choice of space $X$ and symmetry.
The same principle can be applied to other cases. 
For example, to disprove the following inequality for general Banach space
$$
||\sum_i \delta_i D_i \Delta^{-1} f||_{L^p(X)} \lesssim ||f||_{L^p(X)}
$$
we can show that the scalar inequality is false in $L^1$ and consider the same scheme as above with $p\ge 1$ and $X=L^1(\{-1, 1\}^n)$.
Now let us return to our question: can we  disprove
\begin{equation}
\label{RXbelow3}
    ||\Delta^{1/2-\gamma} f||_{L^p(X)}  \lesssim \Big(\bE_\delta||\sum_i \delta_i D_if||_{L^p(X)}^p\Big)^{1/p}   
\end{equation}
for all Banach spaces of finite co-type when $0<\gamma<1/2, 1\le p<\infty$ ? 
Following the only way we know to generate counterexamples, we should find 
a space $X$ of  scalar functions
$$
 ||\Delta^{1/2-\gamma} f||_{X}  \lesssim \Big(\bE_\delta||\sum_i \delta_i D_if||_{X}^p\Big)^{1/p}   
 $$
 that fails, and then embed it in the vector valued case. But the scalar 
inequality is valid for all $1\le p<\infty$, so the only 
case of failure for the scalar inequality is $L^\infty$ --- and this  space does 
not have finite co-type. So there is no hope of disproving \eqref{RXbelow3} for finite 
co-type by adapting any of the examples we have so far. 
\begin{rem}
And, in fact, we 
 believe that \eqref{RXbelow3} should be valid for 
every $0<\gamma<1/2, 1\le p<\infty,$ and Banach space of finite co-type.
\end{rem}

\section{Open problems}
\label{open}

We left many question concerning singular integrals on discrete cibe unanswered. They concern Banach space valued questions as well as scalar valued functions.

\subsection{Scalar valued functions and singular integrals on discrete cube}

In Section \ref{qua} we obtained the estimate 
$$
\|\sum_i\Delta^{-1/2} D_i f_i\|_p \le Cp^{3/2} \Big(\bE_\delta \|\sum_i \delta_i D_i f_i\|_p^p\Big)^{1/p},\quad 2\le p<\infty\,.
$$
Bourgain's paper \cite{B} indicate that the constant should be of order $p$. How to improve the quantum prove above to get a better constant? Our other approach via Bellman function (it will be presented elsewhere) also seems to give constant of order $p^{3/2}$.

For the more difficult Riesz estimate from above we know that for $1<p<2$ this cannot be true: $\||\nabla f|_{\ell^2}\|_p \lesssim \|\Delta^{1/2}f\|_p$. But maybe the following is true
$$
\||\nabla f |_{\ell^2}\|_p \lesssim \|\Delta^{1/p}f\|_p, \quad 1<p<2\,?
$$
This question seems very enticing, and it has been already asked in \cite{ELP}.

\bigskip

\subsection{Banach space valued functions and singular integrals on discrete cube}

The following inequality should be true for a wide class of Banach spaces $X$, but it is proved above only for some special $X$:
\begin{equation}
\label{RXbelowGamma}
    ||\Delta^{1/2-\gamma} f||_{L^p(X)} \lesssim \Big(\bE_\delta||\sum_i \delta_i D_if||_{L^p(X)}^p  \Big)^{1/p}\,. 
\end{equation}
Is it true for all Banach spaces of finite co-type when $0<\gamma<1/2, 1\le p<\infty$ ? For very particular case of $X=L^q$  and some $(p,q)$ regimes this is easy. But  the regimes $2\le p<q <\infty$ this  and $1<p<2<q$ are left without an answer. See the discussion of the difficulty of constructing a counterexample in Subsection \ref{nocex} above.

We can ask the same questions for the ``ultimate'' Riesz transform estimate from below, namely, for $\gamma=0$.  Consider the estimate
\begin{equation}
\label{RXbelow4}
    ||\Delta^{1/2} f||_{L^p(X)} \lesssim \Big(\bE_\delta||\sum_i \delta_i D_if||_{L^p(X)}^p  \Big)^{1/p}\,. 
\end{equation}
Is it true for all Banach spaces of finite co-type when $1< p<\infty$ ?  For $X=L^q$, $2\le q\le p$ it is Theorem \ref{Delta12} above.

\medskip

The difficulty here is exacerbated by the following observation. Above inequality is of the type the ``Riesz estimate from below'', and, as such, the intuition says that it should be very widely true. On the other hand, we are used that singular integrals with Banach space valued function have to be true at least in $L^p(X)$ if $X$ is $UMD$ and $p\ge 2$. But on discrete cube this is not true anymore. We saw a counterexample above with $X=L^s, 1<s<2$, showing that
$$
 \bE_\delta || \sum_i \delta_i D_i g||_{L^p(X)}^p \lesssim ||\Delta^{1/2} g||_{L^p(X)}^p, \quad 2\le p<\infty,
$$
cannot be true with constant independent of $n$. See  \eqref{cannot}  in Subsection \ref{cexRieszXfrAbove}.
This is  inequality is of the type the ``Riesz estimate from above'', but for scalar functions it is true for $2\le p<\infty$!

\end{document}